\definecolor{codegreen}{rgb}{0,0.6,0}
\definecolor{codegray}{rgb}{0.5,0.5,0.5}
\definecolor{codepurple}{rgb}{0.58,0,0.82}
\definecolor{backcolour}{rgb}{0.95,0.95,0.92}
\lstdefinestyle{mystyle}{
	backgroundcolor=\color{backcolour},   
	commentstyle=\color{codegreen},
	keywordstyle=\color{magenta},
	numberstyle=\footnotesize\color{codegray},
	stringstyle=\color{codepurple},
	basicstyle=\ttfamily\small,
	breakatwhitespace=false,         
	breaklines=true,                 
	captionpos=b,                    
	keepspaces=true,                 
	numbers=left,                    
	numbersep=5pt,                  
	showspaces=false,                
	showstringspaces=false,
	showtabs=false,                  
	tabsize=2
}
\definecolor{seagreen}{rgb}{0.18, 0.55, 0.34}
\definecolor{mediumviolet-red}{rgb}{0.78, 0.08, 0.52}
\definecolor{khaki}{rgb}{0.94, 0.9, 0.55}
\lstdefinelanguage{mypython}
{
	keywords=[1]{from, import, assert, not, print},
	keywordstyle=[1]{\color{mediumviolet-red}},
	keywords=[2]{surecr, torch, cp, lo, pl},
	keywordstyle=[2]{\color{seagreen}},
	numbers=none,
	upquote=true,
	showstringspaces=false,
	basicstyle=\ttfamily,
	columns=fullflexible,
	keepspaces=true,
	emph={True,False,as,def,return,float,class,match,switch,len},
	emphstyle={\color{seagreen}},
	frame=trBL,
	belowskip=1em,
	aboveskip=1em,
	captionpos=b
}
\crefname{equation}{}{}
\crefname{chapter}{Chapter}{Chapters}
\crefname{item}{item}{items}
\crefname{figure}{Figure}{Figures}
\crefname{theorem}{Theorem}{Theorems}
\crefname{assumption}{Assumption}{Assumption}
\crefname{lemma}{Lemma}{Lemmas}
\crefname{proposition}{Proposition}{Propositions}
\crefname{corollary}{Corollary}{Corollarys}
\crefname{definition}{Definition}{Definitions}
\crefname{fact}{Fact}{Facts}
\crefname{example}{Example}{Examples}
\crefname{algorithm}{Algorithm}{Algorithms}
\crefname{remark}{Remark}{Remarks}
\crefname{note}{Note}{Notes}
\crefname{notation}{Notation}{Notations}
\crefname{case}{Case}{Cases}
\crefname{exercise}{Exercise}{Exercises}
\crefname{question}{Question}{Questions}
\crefname{claim}{Claim}{Claims}
\crefname{enumi}{}{}
\numberwithin{equation}{section}
\NewDocumentCommand{\lplabel}{o m}{%
	\makebox[0pt][r]{#2\hspace*{2em}}%
	\IfNoValueF{#1}
	{\def\@currentlabel{#2}\ltx@label{#1}}
}
\theoremstyle{plain}
\newtheorem{theorem}{Theorem}[section]
\newtheorem{fact}[theorem]{Fact}
\newtheorem{lemma}[theorem]{Lemma}
\newtheorem{proposition}[theorem]{Proposition}
\newtheorem{assumption}{Assumption}
\theoremstyle{definition}
\newtheorem{example}[theorem]{Example}
\newcommand{\minimize}{\ensuremath{\operatorname{minimize}}}
\newcommand{\maximize}{\ensuremath{\operatorname{maximize}}}
\newcommand{\argmin}{\ensuremath{\operatorname{argmin}}}
\newcommand{\Pro}{\ensuremath{\operatorname{P}}}
\newcommand{\Prox}{\ensuremath{\operatorname{Prox}}}
\newcommand{\dom}{\ensuremath{\operatorname{dom}}}
\providecommand{\abs}[1]{\left|#1\right|}
\providecommand{\norm}[1]{\left\lVert#1\right\rVert}
\providecommand{\innp}[1]{\left\langle#1\right\rangle}
\newcommand\scalemath[2]{\scalebox{#1}{\mbox{\ensuremath{\displaystyle #2}}}}
\begin{document}

\title{A Note on the Convergence of the OGAProx}

\author{
	Hui Ouyang\thanks{Department of Electrical Engineering, Stanford University.
		E-mail: \href{mailto:houyang@stanford.edu}{\texttt{houyang@stanford.edu}}.}
}

\date{October 30, 2023}

\maketitle

\begin{abstract}
In this note, we consider  the Optimistic Gradient Ascent-Proximal Point Algorithm 
(OGAProx)
 proposed   by Bo{\c{t}}, Csetnek, and Sedlmayer for solving a saddle-point problem 
 associated with a convex-concave function 
constructed by a nonsmooth coupling function and one regularizing function.
 
 We first provide a counterexample to show that the convergence of the minimax gap
 function, evaluated at the ergodic sequences, is insufficient to demonstrate 
 the convergence of the function values evaluated at the ergodic 
 sequences. 
 Then under the same assumptions used by Bo{\c{t}} et al.\,for proving the
 convergence of the minimax gap function, 
  we present convergence results 
  for the function values evaluated at the ergodic sequences generated by the OGAProx
  with convergence rates of order 
  $\mathcal{O}\left(\frac{1}{k}\right)$, $\mathcal{O}\left(\frac{1}{k^{2}}\right)$,
  and $\mathcal{O}\left(\theta^{k}\right)$ with $\theta \in (0,1)$ 
  for the associated convex-concave coupling function being convex-concave, 
  convex-strongly concave, and strongly convex-strongly concave, respectively. 
\end{abstract}

{\small
	\noindent
	{\bfseries 2020 Mathematics Subject Classification:}
	{
		Primary 90C25, 47H05;  
		Secondary 47J25,  90C30.
	}
	
	\noindent{\bfseries Keywords:}
	Convex-Concave Saddle-Point Problems, 
	Proximity Mapping, Gradient Ascent, 
	Convergence,   Linear Convergence
}

\section{Introduction} 
Throughout this work, let $\mathcal{H}_{1}$ and $\mathcal{H}_{2}$ be real Hilbert 
spaces, and let
 $f: \mathcal{H}_{1} \times \mathcal{H}_{2} \to \mathbf{R} \cup \{ -\infty, +\infty\}$ 
 satisfy that $(\forall y \in Y)$ $f(\cdot, y) : \mathcal{H}_{1}  \to \mathbf{R} \cup \{ 
 -\infty\}$ is proper, convex, and lower semicontinuous,  and that $(\forall x \in X)$
 $f(x, \cdot) : \mathcal{H}_{2} \to \mathbf{R} \cup \{ +\infty\}$ is proper, concave, and 
 upper semicontinuous.   
 We say  $(x^{*},y^{*}) \in  \mathcal{H}_{1} \times \mathcal{H}_{2}$  is a 
 \emph{saddle-point}  of $f$ if
 \begin{align} \label{eq:saddle-point}
\left(\forall (x,y) \in  \mathcal{H}_{1} \times \mathcal{H}_{2}\right) \quad  f(x^{*},y) \leq  
f(x^{*},y^{*}) \leq  f(x,y^{*}).
\end{align}
In this work, we assume that there exists at least one saddle-point of $f$, 
 and we aim to solve the following  \emph{convex-concave saddle-point problem}:
 \begin{align} \label{eq:problem}
 \maximize_{y \in \mathcal{H}_{2}}	\minimize_{x \in \mathcal{H}_{1}} f(x,y).
 \end{align}
 
 In the paper \cite{BotCsetnekSedlmayer2022accelerated} by Bo{\c{t}},
 Csetnek, and Sedlmayer, 
 the authors proposed the Optimistic Gradient Ascent-Proximal Point Algorithm 
 (OGAProx)
 for solving a saddle-point problem 
 associated with a convex-concave function 
 with a nonsmooth coupling function and one regularizing function.
 In particular, the authors proved convergence results on 
 the sequence of the iterations  
 and also the minimax gap function evaluated at the ergodic sequences 
 for the OGAProx.
 
In this work, 
under the same assumptions used by Bo{\c{t}} et al.\,for showing the
convergence of the minimax gap function, 
we shall complement convergence results on the values of function 
evaluated at the ergodic sequences generated by  
the OGAProx.
 
The rest of the work is organized as follows. 
In \cref{section:counterexample}, we work on one example 
showing that the convergence of 
the minimax gap function evaluated at the ergodic sequences
doesn't imply the convergence of 
the values of function evaluated at the ergodic sequences for the OGAProx. 
In \cref{section:convergence}, 
under the same assumptions used by Bo{\c{t}} et al.\,for showing the
convergence of the minimax gap function, 
we provide the convergence 
of the values of function evaluated at the ergodic sequences generated by the 
OGAProx. 
 In particular, as the authors did in \cite{BotCsetnekSedlmayer2022accelerated}, 
 we consider
 three cases of the associated convex-concave function
 (convex-concave, convex-strongly concave, and strongly convex-strongly concave),
 and show the convergence of 
 the values of function evaluated at the ergodic sequences generated by the OGAProx
 with convergence rates of
 order $\mathcal{O}\left(\frac{1}{k}\right)$, $\mathcal{O}\left(\frac{1}{k^{2}}\right)$, and
 $\mathcal{O}\left(\theta^{k}\right)$ with $\theta \in (0,1)$, respectively.

To end this section, we provide some notation frequently used in this work below. 
 We use the convention that $\mathbf{N}:= \{0,1, 2, \cdots \}$ is the set of all 
 nonnegative integers.
 $\mathbf{R}$,  $\mathbf{R}_{+}$, and  $\mathbf{R}_{++}$ are the set
 of all real numbers, the set of all nonnegative real numbers, 
 and the set of all positive real numbers, respectively.  
 Let  $\mathcal{H}$ be a real Hilbert space. 
 Let  $g: \mathcal{H} \to \mathbf{R} \cup \{+\infty\}$ be a proper, convex, 
 and lower semicontinuous function. 
 The \emph{proximity operator $\Prox_{g}$ of $g$} is defined by
 \begin{align*}
 	\Prox_{g}: \mathcal{H} \to \mathcal{H}: x \mapsto \argmin_{y \in \mathcal{H}} 
 	\left(g(y) + \frac{1}{2} \norm{x-y}^{2}\right).
 \end{align*}
 
 \section{Counterexample} \label{section:counterexample}
 In this section, we consider the function $f: \mathbf{R}^{2} \to \mathbf{R}$ defined as 
 \begin{align*}
 	(\forall (x,y) \in \mathbf{R}^{2}) \quad f(x,y) = xy.
 \end{align*}

It is easy to see that the saddle-point of $f$ is $(x^{*},y^{*}) =(0,0)$. 
Moreover, we have that 
\begin{align} \label{eq:minmaxgap}
	(\forall (x,y) \in \mathbf{R}^{2}) \quad f(x,y^{*}) -f(x^{*},y) =0 -0 =0 
	=f(0,0)=f(x^{*},y^{*}).
\end{align}

Note that $(\forall (\bar{x},\bar{y}) \in \mathbf{R}^{2})$ $\nabla_{x}f(\bar{x},\bar{y})  = 
\bar{y}$ and $\nabla_{y}f(\bar{x},\bar{y})  = \bar{x}$. Then to satisfy 
\cite[Inequality~(2)]{BotCsetnekSedlmayer2022accelerated} (that is also 
\cref{eq:nablay} below), we can take $L_{yx} = 1$ and $L_{yy}=0$. 
Clearly, this function with $(\forall (x,y) \in \mathbf{R}^{2})$  $\Phi(x,y) =xy$ and $g(y) 
 \equiv 0$ satisfies all assumptions of the convex-concave function presented in 
 \cite[Section~1.1]{BotCsetnekSedlmayer2022accelerated} 
 (which is provided in \cref{assumption} below).

Let $(x^{0},y^{0}) $ be in $\mathbf{R}^{2}$. 
Based on \cite[Section~1.2]{BotCsetnekSedlmayer2022accelerated}, 
 the sequence of iterations $\left((x^{k},y^{k})\right)_{k \in \mathbf{N}}$ generated by 
 the OGAProx (see also \cref{eq:OGAProx} 
 below for details) is: for every $k \in \mathbf{N}$,
 
 \begin{subequations}\label{eq:yk}
 	\begin{align}
 		y^{k+1}  &= \Prox_{\sigma_{k}0} \left(y^{k} + \sigma_{k} \left( (1+\theta_{k})x^{k} 
 		-\theta_{k}x^{k-1}   \right) \right) \\
 		&=\argmin_{y \in \mathbf{R}} \left(  0 
 		+\frac{1}{2\sigma_{k}} \abs{y^{k} + \sigma_{k} \left( (1+\theta_{k})x^{k} 
 			-\theta_{k}x^{k-1}   \right) -y   }^{2} \right)\\
 		&= y^{k} + \sigma_{k} \left( (1+\theta_{k})x^{k} 
 		-\theta_{k}x^{k-1}   \right) 
 	\end{align}
 \end{subequations}
 and 
 	\begin{align}\label{eq:xk}
 		x^{k+1} = \Prox_{\tau_{k}f(\cdot, y^{k+1})} x^{k}
 		= \argmin_{x \in \mathbf{R}} xy^{k+1} +\frac{1}{2\tau_{k}} \abs{x-x^{k}}^{2}
 		=x^{k}-\tau_{k}y^{k+1}.
 	\end{align}

 	By continuing applying formulae above, we know that 
 	\begin{align}\label{eq:yxk}
 		(\forall k \in \mathbf{N}) \quad 
 		y^{k+1} =y^{0} +\sum^{k}_{i=0}\sigma_{i} \left( (1+\theta_{i})x^{i} 
 		-\theta_{i}x^{i-1}   \right) \quad \text{and} \quad x^{k+1} =x^{0} - 
 		\sum^{k}_{i=0}\tau_{i}y^{i+1}.
 	\end{align}
Set
 	\begin{align}\label{eq:fhatxyk}
 		(\forall k \in \mathbf{N} \smallsetminus \{0\}) \quad 
 		\hat{x}_{k} := \frac{1}{\sum^{k-1}_{i=0}t_{i}} 
 		\sum^{k-1}_{j=0}t_{j}x^{j+1} 
 		\quad \text{and} \quad 
 		\hat{y}_{k} := \frac{1}{\sum^{k-1}_{i=0}t_{i}} 
 		\sum^{k-1}_{j=0}t_{j}y^{j+1},
 	\end{align}
where $(\forall k \in \mathbf{N})$ $t_{k} \in \mathbf{R}_{++}$.

 We show below in \cref{example:counterexample} that the convergence 
 $ \lim_{k \to  \infty}	f( \hat{x}_{k}, y^{*} ) -f(x^{*},  \hat{y}_{k} )  =  f(x^{*},y^{*})$ 
 does not  imply the convergence
 of $\lim_{k \to \infty} f(\hat{x}_{k},  \hat{y}_{k} )  = f(x^{*},y^{*})$.

 \begin{example} \label{example:counterexample}
 Let $f: \mathbf{R}^{2} \to \mathbf{R}$ defined as 
 \begin{align*}
 	(\forall (x,y) \in \mathbf{R}^{2}) \quad f(x,y) = xy.
 \end{align*}
Let $\left( (x^{k},y^{k}) \right)_{k \in \mathbf{N}}$ be defined as \cref{eq:yk} and 
\cref{eq:xk} with $(x^{-1},y^{-1}) =(x^{0},y^{0}) =(1,1)$.
  Let $(\forall k \in \mathbf{N})$ $t_{k} \equiv t_{0} \in \mathbf{R}_{++}$. 
  Then, via \cref{eq:fhatxyk},  we have that 
  	\begin{align*}
  	(\forall k \in \mathbf{N} \smallsetminus \{0\}) \quad 
  	\hat{x}_{k} = \frac{1}{k} 
  	\sum^{k-1}_{j=0}x^{j+1} 
  	\quad \text{and} \quad 
  	\hat{y}_{k} = \frac{1}{k} 
  	\sum^{k-1}_{j=0}y^{j+1}.
  \end{align*}
Moreover, the following statements hold.
 \begin{enumerate}
 	\item \label{example:counterexample:minimaxgap} $(\forall k \in \mathbf{N} 
 	\smallsetminus \{0\})$ $f(\hat{x}_{k},y^{*} )- 
 	f(x^{*},\hat{y}_{k} ) \equiv 0$.
 	
 	Consequently, 
 	$ \lim_{k \to  \infty}	f( \hat{x}_{k}, y^{*} ) -f(x^{*},  \hat{y}_{k} )  =  f(x^{*},y^{*})$. 
 	
 	\item \label{example:counterexample:assump1} Suppose $(\forall k \in \mathbf{N})$ 
 	$t_{k} \equiv t_{0}= 1$, $\tau_{k} \equiv \tau \in 
 	\mathbf{R}_{++}$,
 	$\sigma_{k} \equiv \sigma \in \mathbf{R}_{++}$, and 
 	$\theta_{k} \equiv 1$. Then $f( \hat{x}_{k} ,\hat{y}_{k} ) = \frac{1}{k^{2}} 
 	\frac{1}{\tau \sigma} \left( 
 	\left(x^{0}-x^{k} -\tau y^{k+1}\right)\left( y^{k+1} 
 	-y^{0} -\sigma x^{k}    \right) \right)\to f(x^{*},y^{*})$ as $k \to \infty$.
 	
 	\item \label{example:counterexample:assump2} 
 	Let   $\epsilon \in (0, \frac{6}{2\pi^{2}})$. 
 	Suppose that    
 	\begin{align}\label{eq:parameters}
 		(\forall k \in \mathbf{N}) \quad 
 		\sigma_{k} =\epsilon, \theta_{k} =\epsilon, \text{ and } \tau_{k} = \begin{cases}
 			\frac{\epsilon }{y^{k+1}(k+1)^{2}} &\quad \text{if } y^{k+1} \neq 0,\\
 			0 &\quad  \text{otherwise}.
 		\end{cases}
 	\end{align}	
  Then we have the following assertions. 
 		\begin{enumerate}
 			\item \label{example:counterexample:assump2:xyk}  $(\forall k \in \mathbf{N})$ 
 			$x^{k} >\frac{1}{2}$ and $y^{k} 
 			>1-\epsilon^{2}>0$.
 			\item \label{example:counterexample:assump2:f}  $	\left(k \in \mathbf{N} 
 			\smallsetminus \{0\}\right)	\quad f( \hat{x}_{k} 
 			,\hat{y}_{k} ) 
 			=\hat{x}_{k} \hat{y}_{k} > \frac{1-\epsilon^{2}}{2} >0$.
 			
 			Consequently,  $f( \hat{x}_{k} ,\hat{y}_{k} ) \not\to 
 			f(x^{*},y^{*})$ as $k \to \infty$. 
 		\end{enumerate}
 \end{enumerate}
 \end{example}

\begin{proof}
	\cref{example:counterexample:minimaxgap}: This is clear from \cref{eq:minmaxgap}. 

\cref{example:counterexample:assump1}:  Note that this assumption is consistent with 
that of \cite[Theorem~9]{BotCsetnekSedlmayer2022accelerated}.
Hence, due to results from \cite[Theorem~9]{BotCsetnekSedlmayer2022accelerated},
we know that under this assumption,
 the sequence of iterations $\left( (x^{k},y^{k}) \right)_{k \in \mathbf{N}}$ is bounded. 

Combine \cref{eq:yxk} with our assumptions of the parameters to deduce  that
 for every $k \in \mathbf{N}$,
	\begin{align*}
		&y^{k+1} =y^{0} +\sigma \sum^{k}_{i=0}x^{i} + \sigma \theta \sum^{k}_{i=0} \left(  
		x^{i} -x^{i-1}   \right) =y^{0} +\sigma \sum^{k}_{i=0}x^{i} + \sigma \theta 
		\left( x^{k} -x^{0}   \right), \text{ and}   \\
		& x^{k+1} =x^{0} - \tau \sum^{k}_{i=0} y^{i+1},
	\end{align*}
which yield that 
\begin{subequations} \label{eq:symxyk}
\begin{align}
&x^{0} + \sum^{k-1}_{j=0}x^{j+1} =	\sum^{k}_{i=0}x^{i}  = \frac{1}{\sigma} \left(y^{k+1} 
-y^{0}\right) +\theta \left(x^{0} 
	-x^{k}\right), \text{ and}   \label{eq:symxyk:x}\\
&\sum^{k-1}_{i=0} y^{i+1}  +y^{k+1} = \sum^{k}_{i=0} y^{i+1} =\frac{1}{\tau} \left(x^{0} 
	-x^{k+1}\right). \label{eq:symxyk:y}
\end{align}	
\end{subequations}

Therefore, we obtain that  for every $k \in \mathbf{N}$,
\begin{align*}
	 &\hat{x}_{k} =\frac{1}{k}  \sum^{k-1}_{j=0}x^{j+1} 
	 = \frac{1}{k} \left(  \frac{1}{\sigma} \left(y^{k+1} 
	 -y^{0}\right) +\left(x^{0} 
	 -x^{k}\right) -x^{0}  \right) 
	 =\frac{1}{\sigma k}\left(y^{k+1}  -y^{0}\right)  -\frac{1}{k}x^{k};\\
	 &\hat{y}_{k} =\frac{1}{k}  \sum^{k-1}_{j=0}y^{j+1} 
	 =   \frac{1}{k} \left(  \frac{1}{\tau} \left(x^{0}  -x^{k}\right) -y^{k+1} \right). 
\end{align*}
Therefore, 
\begin{align*}
	f( \hat{x}_{k} ,\hat{y}_{k} )= \hat{x}_{k} \hat{y}_{k} = \frac{1}{k^{2}} \frac{1}{\tau 
	\sigma} \left( 
	\left(x^{0}-x^{k} -\tau y^{k+1}\right)\left( y^{k+1} 
	-y^{0} -\sigma x^{k}    \right) \right),
\end{align*}
which, 
combined with the boundedness of $\left( (x^{k},y^{k}) \right)_{k \in \mathbf{N}}$,
guarantees that $\lim_{k \to \infty} f( \hat{x}_{k} ,\hat{y}_{k} ) = 0=f(x^{*},y^{*})$.

\cref{example:counterexample:assump2}: 
Note that via \cref{eq:yk}  for every $k \in \mathbf{N}$,
we are able to take $\sigma_{k}$ and $\theta_{k}$ 
based on the values of $(\forall i \in \{0, \cdots, k\})$ $x^{i}$.
Similarly, according to \cref{eq:xk}, for every $k \in \mathbf{N}$,
we are able to take $\tau_{k}$ based on the values of $(\forall i \in \{0, \cdots, k, 
k+1\})$ $y^{i}$. Therefore, our assumption is practical.

\cref{example:counterexample:assump2:xyk}:  We prove below by induction that  
$(\forall k \in \mathbf{N})$ $x^{k} >\frac{1}{2}$ and $y^{k} >1-\epsilon^{2}$.
Recall that $(x^{-1},y^{-1}) =(x^{0},y^{0}) =(1,1)$. Combine \cref{eq:yxk} and 
\cref{eq:parameters} to derive that 
\begin{align}
	y^{1} = y^{0} +\epsilon x^{0} +\epsilon^{2} \cdot 0= 1+\epsilon  > 1-\epsilon^{2}>0 
	\text{ 
	and }
	x^{1} = x^{0} -  \frac{\epsilon}{y^{1}} y^{1} =x^{0} -\epsilon =1-\epsilon >\frac{1}{2}.
\end{align}
Let $N \in \mathbf{N} \smallsetminus \{0\}$. 
Assume that   $\left( \forall i \in \{0, 1, \cdots, N \}\right)$  
$x^{i} >\frac{1}{2}$ and $y^{i} >1-\epsilon^{2} >0$.
Applying \cref{eq:yxk} and \cref{eq:parameters} again, we have that
\begin{align*} 
	y^{N+1} &=y^{0} +\epsilon\sum^{N}_{i=0}x^{i} + \epsilon^{2}\sum^{N}_{i=0} 
	\left(  x^{i}-x^{i-1} \right)\\ 
	&=y^{0} +\epsilon\sum^{N}_{i=0}x^{i} + \epsilon^{2} \left(  x^{N}-x^{0} \right) \\
	&= y^{0} -\epsilon^{2} x^{0} + \epsilon\sum^{N}_{i=0}x^{i} + \epsilon^{2} x^{N}\\ 
	&>y^{0} -\epsilon^{2} x^{0} =1-\epsilon^{2}>0.
\end{align*} 
Employing this result, \cref{eq:yxk}, and \cref{eq:parameters}, we derive that
\begin{align*} 
	x^{N+1} &=x^{0} - 
	\sum^{N}_{i=0}\tau_{i}y^{i+1} =x^{0} -\epsilon 
	\sum^{N}_{i=0}\frac{1}{y^{i+1}(i+1)^{2}}y^{i+1} \\
	&=x^{0}-\epsilon \sum^{N}_{i=0} 
	\frac{1}{(i+1)^{2}} > x^{0} - \epsilon \sum_{i \in \mathbf{N}} 
	\frac{1}{(i+1)^{2}} =1-\epsilon \frac{\pi^{2}}{6} >1-\frac{1}{2} = \frac{1}{2}.
\end{align*}
Altogether, we proved that 
$(\forall k \in \mathbf{N})$ $x^{k} >\frac{1}{2}$ and $y^{k} >1-\epsilon^{2} >0$
by induction.

\cref{example:counterexample:assump2:f}: 
According to \cref{example:counterexample:assump2:xyk}, we have that
for every $k \in \mathbf{N} \smallsetminus \{0\}$,
\begin{align*}
	\hat{x}_{k}  
	=\frac{1}{k} \sum^{k-1}_{j=0}x^{j+1}  >\frac{1}{2}
	\quad \text{and} \quad
	\hat{y}_{k}  
	=\frac{1}{k} \sum^{k-1}_{j=0}y^{j+1}>1-\epsilon^{2},
\end{align*}
which guarantees that 
\begin{align*}
	\left(k \in \mathbf{N} \smallsetminus \{0\}\right)	\quad f( \hat{x}_{k} ,\hat{y}_{k} ) 
	=\hat{x}_{k} \hat{y}_{k} > \frac{1-\epsilon^{2}}{2} >0.
\end{align*}
 This shows that it is impossible to have  $f( \hat{x}_{k} ,\hat{y}_{k} ) \to 
 f(x^{*},y^{*})$ as $k \to \infty$ in this case since $f(x^{*},y^{*}) =0$. 
\end{proof}
 
\section{Convergence of OGAProx} \label{section:convergence}

In this section, we shall work on the convergence of the values of function 
evaluated at the ergodic sequences constructed by the OGAProx.
In particular, we will consider three cases of the associated function: 
convex-concave, convex-strongly concave, and 
strongly convex-strongly concave.

The following result will play an essential role in proving our convergence results later.

\begin{fact}	\label{fact:fxkykx*y*} {\rm \cite[Lemma~2.5]{OuyangAPPA}}
	Let $f: \mathcal{H}_{1} \times \mathcal{H}_{2} \to \mathbf{R} \cup \{-\infty, +\infty\}$ 
	satisfy that $(\forall y \in \mathcal{H}_{2} )$ $f(\cdot, y)$ is convex 
	and $(\forall x \in \mathcal{H}_{1})$ $f(x,\cdot)$ is concave. 
	Let $(x^{*},y^{*})$ be a saddle-point of $f$, let $((x^{k},y^{k}))_{k \in \mathbf{N}}$ 
	be in $\mathcal{H}_{1}\times \mathcal{H}_{2}$, and let $(\forall k \in 
	\mathbf{N})$ $t_{k} \in \mathbf{R}_{+}$ with $t_{0} \in \mathbf{R}_{++}$.
	Set 
	\begin{align} \label{eq:lemma:fxkykx*y*}
	(\forall k \in \mathbf{N}) \quad 
	\hat{x}_{k} := \frac{1}{\sum^{k-1}_{i=0}t_{i}} 
	\sum^{k-1}_{j=0}t_{j}x^{j+1} 
	\quad \text{and} \quad 
	\hat{y}_{k} := \frac{1}{\sum^{k-1}_{i=0}t_{i}} 
	\sum^{k-1}_{j=0}t_{j}y^{j+1}.
	\end{align}
	Then we have that for every $k \in \mathbf{N}$,
	\begin{subequations}
		\begin{align}
			&f(\hat{x}_{k},\hat{y}_{k}) -f(x^{*},y^{*}) 
			\leq  \frac{1}{\sum^{k-1}_{i=0}t_{i}} 
			\sum^{k-1}_{j=0} t_{j} \left( f(x^{j+1}, \hat{y}_{k})  
			-f(x^{*},y^{j+1}) \right); \label{eq:lemma:fxkykx*y*:x}\\
			&f(x^{*},y^{*}) -f(\hat{x}_{k},\hat{y}_{k})
			\leq  \frac{1}{\sum^{k-1}_{i=0}t_{i}} 
			\sum^{k-1}_{j=0} t_{j} \left( f(x^{j+1},y^{*})  
			-f(\hat{x}_{k},y^{j+1}) \right).\label{eq:lemma:fxkykx*y*:y}
		\end{align}
	\end{subequations}
	Consequently, if
	\begin{align*}
		&\lim_{k \to \infty}   \frac{1}{\sum^{k-1}_{i=0}t_{i}} 
		\sum^{k-1}_{j=0}t_{j} \left( f(x^{j+1}, 
		\hat{y}_{k})  -f(x^{*},y^{j+1}) \right) =0, \text{ and}\\
		& \lim_{k \to \infty}  \frac{1}{\sum^{k-1}_{i=0}t_{i}} 
		\sum^{k-1}_{j=0}t_{j} \left( 
		f(x^{j+1},y^{*})  
		-f(\hat{x}_{k},y^{j+1}) \right)=0,
	\end{align*} 
	then 
	$\lim_{k \to \infty} f(\hat{x}_{k},\hat{y}_{k}) = f(x^{*},y^{*})$.
\end{fact}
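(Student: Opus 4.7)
The plan is to exploit two ingredients only: (a) convexity of $f(\cdot, y)$ and concavity of $f(x,\cdot)$, applied to the convex combinations defining $\hat x_k$ and $\hat y_k$; and (b) the saddle-point inequality \cref{eq:saddle-point} at $(x^{*},y^{*})$. No dynamics of the iteration enter; the statement is purely a consequence of convex-concave structure plus Jensen's inequality.

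For \cref{eq:lemma:fxkykx*y*:x}, I would first note that since $\sum_{j=0}^{k-1} t_j / \sum_{i=0}^{k-1} t_i = 1$ and $t_j \geq 0$, the point $\hat{x}_k$ is a convex combination of the $x^{j+1}$'s. Applying Jensen's inequality to the convex function $f(\cdot, \hat y_k)$ gives
\begin{equation*}
f(\hat x_k, \hat y_k) \;\leq\; \frac{1}{\sum_{i=0}^{k-1} t_i} \sum_{j=0}^{k-1} t_j \, f(x^{j+1}, \hat y_k).
\end{equation*}
From the right half of \cref{eq:saddle-point}, $f(x^*, y^{j+1}) \leq f(x^*, y^*)$ for every $j$; taking the weighted average and negating yields
\begin{equation*}
-f(x^*, y^*) \;\leq\; -\frac{1}{\sum_{i=0}^{k-1} t_i}\sum_{j=0}^{k-1} t_j \, f(x^*, y^{j+1}).
\end{equation*}
Adding the two inequalities produces exactly \cref{eq:lemma:fxkykx*y*:x}.

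For \cref{eq:lemma:fxkykx*y*:y}, I would run the dual argument. Concavity of $f(\hat x_k, \cdot)$ together with Jensen gives
\begin{equation*}
-f(\hat x_k, \hat y_k) \;\leq\; -\frac{1}{\sum_{i=0}^{k-1} t_i}\sum_{j=0}^{k-1} t_j \, f(\hat x_k, y^{j+1}),
\end{equation*}
while the other half of the saddle-point inequality, $f(x^*, y^*) \leq f(x^{j+1}, y^*)$, averaged gives
\begin{equation*}
f(x^*, y^*) \;\leq\; \frac{1}{\sum_{i=0}^{k-1} t_i}\sum_{j=0}^{k-1} t_j \, f(x^{j+1}, y^*).
\end{equation*}
Summing these yields \cref{eq:lemma:fxkykx*y*:y}.

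The ``consequently'' clause is then immediate: the two inequalities sandwich $f(\hat x_k, \hat y_k) - f(x^*, y^*)$ between quantities both tending to zero. There is essentially no obstacle here; the only minor care needed is verifying that $\sum_{i=0}^{k-1} t_i > 0$ for every $k \in \mathbf{N} \smallsetminus \{0\}$, which is guaranteed by the hypothesis $t_0 \in \mathbf{R}_{++}$ together with $t_k \in \mathbf{R}_{+}$. I would also remark that properness/l.s.c./u.s.c.\ of the slices of $f$ are not used here; the proof works under the raw convex-concave assumption, which is why the result is lightweight enough to serve as the workhorse lemma for the convergence-rate theorems in \cref{section:convergence}.
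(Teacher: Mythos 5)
Your proof is correct: the paper does not actually prove this statement but imports it as a Fact from the cited reference, and your argument---Jensen's inequality applied to the convex slice $f(\cdot,\hat y_k)$ and the concave slice $f(\hat x_k,\cdot)$, combined with the two halves of the saddle-point inequality \cref{eq:saddle-point} and a final sandwich---is exactly the standard derivation such a lemma rests on, so there is nothing substantive to compare. One cosmetic slip: the inequality $f(x^*,y^{j+1})\le f(x^*,y^*)$ you invoke is the \emph{left} half of the chain in \cref{eq:saddle-point}, not the right half.
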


In the rest of this work, let $\mathcal{H}_{1}$ and $\mathcal{H}_{2}$ be real Hilbert 
spaces,
let $\Phi: \mathcal{H}_{1} \times \mathcal{H}_{2} \to \mathbf{R} \cup \{+\infty\}$ be 
a coupling function with 
$\dom \Phi:= \{ (x,y) \in \mathcal{H}_{1} \times \mathcal{H}_{2} ~:~ \Phi(x,y) < +\infty  \} 
\neq \varnothing$,
let $g: \mathcal{H}_{2} \to \mathbf{R} \cup \{+\infty\}$ be a regulariser, and let
$f: \mathcal{H}_{1} \times \mathcal{H}_{2} \to \mathbf{R} \cup \{-\infty, +\infty\}$ be 
defined as
\begin{align*}
	\left(\forall (x,y)  \in \mathcal{H}_{1} \times \mathcal{H}_{2} \right)
	\quad f(x,y) :=\Phi(x,y) -g(y).
\end{align*}
Let $(x^{*},y^{*})$ be a saddle-point of $f$.
Set $\Pro_{\mathcal{H}_{1}} (\dom \Phi) := \{ u \in \mathcal{H}_{1} ~:~ \exists y \in 
\mathcal{H}_{2} \text{ such that } (u,y) \in \dom \Phi\}$.	 We copy assumptions 
presented in
 \cite[Section~1.1]{BotCsetnekSedlmayer2022accelerated} below.
\begin{assumption} \label{assumption}
Henceforth, we have the following assumptions given in 
\cite[Section~1.1]{BotCsetnekSedlmayer2022accelerated}.
	\begin{enumerate}
		\item $g$ is proper, lower semicontinuous, and convex with modulus $\nu \in 
		\mathbf{R}_{+}$
		, i.e., $g-\frac{\nu}{2}\norm{\cdot}^{2}$ is convex;
		\item $(\forall y \in \dom g)$ $\Phi(\cdot, y) : \mathcal{H}_{1} \to \mathbf{R} \cup 
		\{+\infty\}$
		 is proper, convex, and lower semicontinuous;
		 \item $\Pro_{\mathcal{H}_{1}} (\dom \Phi)$ is closed, 
		 and  $\left( \forall x \in \Pro_{\mathcal{H}_{1}} (\dom \Phi) \right)$ we have that
		 $\dom \Phi (x, \cdot) =\mathcal{H}_{2}$ and 
		 $\Phi(x, \cdot) : \mathcal{H}_{2} \to \mathbf{R}$ is convex and Fr\'echet 
		 differentiable;
		 \item There exist $L_{yx} \in \mathbf{R}_{+}$ and $L_{yy} \in \mathbf{R}_{+}$ 
		 such that for all 
		 $(x,y)$  and $(x',y')$ in $\Pro_{\mathcal{H}_{1}} (\dom \Phi) \times \dom g$,
		 \begin{align} \label{eq:nablay}
		 	\norm{\nabla_{y} \Phi(x,y) - \nabla_{y} \Phi(x',y')} \leq L_{yx} \norm{x-x'} 
		 	+L_{yy}\norm{y-y'}.
		 \end{align}
	\end{enumerate}
\end{assumption}

We state the Optimistic Gradient Ascent-Proximal Point Algorithm 
(OGAProx) proposed in \cite[Section~1.2]{BotCsetnekSedlmayer2022accelerated}
below. Let $(x^{0},y^{0}) $ be in $\Pro_{\mathcal{H}_{1}} (\dom \Phi) \times \dom g$
and set $(x^{-1},y^{-1}) :=(x^{0},y^{0})$. For every $k \in \mathbf{N}$,
\begin{subequations} \label{eq:OGAProx}
	\begin{align}
		&y^{k+1} =\Prox_{\sigma_{k}g}\left(y^{k} + \sigma_{k} \left[(1+\theta_{k}) 
		\nabla_{y}\Phi(x^{k},y^{k}) - \theta_{k} \nabla_{y} \Phi(x^{k-1},y^{k-1}) 
		\right]\right),\label{eq:OGAProx:y}\\
		&x^{k+1} =\Prox_{\tau_{k}f(\cdot, y^{k+1})}(x^{k}),\label{eq:OGAProx:x}
	\end{align}
\end{subequations}
where $(\sigma_{k})_{k \in \mathbf{N}}$ and $(\tau_{k})_{k \in \mathbf{N}}$ are in 
$\mathbf{R}_{++}$, and $(\theta_{k})_{k \in \mathbf{N}}$  is in $(0,1]$. 

From now on, $\left( (x^{k},y^{k}) \right)_{k \in \mathbf{N}}$ is the sequence of 
iterations generated by the OGAProx presented in \cref{eq:OGAProx} above. 
Let $(\forall k \in \mathbf{N})$ $t_{k} := \frac{\theta_{0}}{\theta_{0}\theta_{1}\cdots 
\theta_{k}}$. Set
\begin{align} \label{eq:hatxy}
(\forall k \in \mathbf{N} \smallsetminus \{0\}) \quad 
\hat{x}_{k} := \frac{1}{\sum^{k-1}_{i=0}t_{i}} 
\sum^{k-1}_{j=0}t_{j}x^{j+1} 
\quad \text{and} \quad 
\hat{y}_{k} := \frac{1}{\sum^{k-1}_{i=0}t_{i}} 
\sum^{k-1}_{j=0}t_{j}y^{j+1}.
\end{align}
Moreover, we denote 
\begin{align}\label{eq:qk}
	(\forall k \in \mathbf{N}) \quad q_{k} :=\nabla_{y}\Phi(x^{k},y^{k}) 
	-\nabla_{y}\Phi(x^{k-1},y^{k-1}). 
\end{align}
\subsection{Convex-(Strongly) Concave Setting}
In this subsection, we consider the convergence of
the sequence of iterations generated by the OGAProx under the assumption that
the coupling function $\Phi$ is convex-concave and
that the function $g$ is convex with modulus $\nu \geq 0$.

Note that if $\nu=0$, then the function 
$\left(\forall (x,y) \in \mathcal{H}_{1} \times \mathcal{H}_{2}\right)$ $f(x,y)=\Phi(x,y) 
-g(y)$  is convex-concave, 
and that if $\nu> 0$, then the function 
$\left(\forall (x,y) \in \mathcal{H}_{1} \times \mathcal{H}_{2}\right)$ $f(x,y)=\Phi(x,y) 
-g(y)$  is convex-strongly concave.

Throughout this subsection, 
set for every $(x,y) \in \mathcal{H}_{1} \times \mathcal{H}_{2}$ and 
for every $k \in \mathbf{N}$,
\begin{align*}
	a_{k}(x,y) :=  &\frac{1}{2\tau_{k}} \norm{x -x^{k}}^{2} 
	+\frac{1}{2\sigma_{k}}
	\norm{y -y^{k}}^{2} + \theta_{k} \innp{q_{k}, y^{k}-y} 
	+\theta_{k} \frac{L_{yx}}{2 \alpha_{k}} \norm{x^{k} -x^{k-1}}^{2}\\
	&+\theta_{k} \frac{L_{yy}}{2} \norm{y^{k} -y^{k-1}}^{2},\\
	b_{k+1}(x,y) :=  	&\frac{1}{2\tau_{k}} \norm{x -x^{k+1}}^{2} +\frac{1}{2}
	\left( \frac{1}{\sigma_{k}} +\nu\right)
	\norm{y -y^{k+1}}^{2} +  \innp{q_{k+1}, y^{k+1}-y} \\
	&+  \frac{L_{yx}}{2 \alpha_{k+1}} \norm{x^{k+1} -x^{k}}^{2}
	+  \frac{L_{yy}}{2} \norm{y^{k+1} -y^{k}}^{2},\\
	c_{k}:= &\frac{1}{2} \left( \frac{1}{\tau_{k}} - \frac{L_{yx}}{\alpha_{k+1}}\right)
	\norm{x^{k+1} -x^{k}}^{2} + \frac{1}{2}
	\left(\frac{1}{\sigma_{k}} - L_{yy} -\theta_{k} \left(L_{yx}\alpha_{k} 
	+L_{yy}\right)\right)\norm{y^{k+1} -y^{k}}^{2}.
\end{align*}

We borrow some results proved in \cite{BotCsetnekSedlmayer2022accelerated}
in the following fact, which will be used in our proofs later.
\begin{fact}
	 \label{fact:preliminaries}
	Let $\nu \geq 0$, let $c_{\alpha} > L_{yx} \geq 0$, and 
let $\theta_{0} =1$, and let $\tau_{0}$ and $\sigma_{0}$ be in $\mathbf{R}_{++}$
	 such that 
	\begin{align*}
		\left(c_{\alpha} L_{yx} \tau_{0} +2 L_{yy} \right)\sigma_{0}  <1.
	\end{align*}
Define
\begin{align*}
	(\forall k \in \mathbf{N}) \quad \theta_{k+1} := \frac{1}{\sqrt{1+ \nu \sigma_{k}}}, \quad
	\tau_{k+1} := \frac{\tau_{k}}{\theta_{k+1}}, \quad
	\text{and} \quad \sigma_{k+1} := \theta_{k+1}\sigma_{k}.
\end{align*}
Set 
\begin{align*}
	(\forall k \in \mathbf{N}) \quad \alpha_{k} :=
	\begin{cases}
		 c_{\alpha} \tau_{0} \quad &\text{if } k =0,\\
		 c_{\alpha} \tau_{k-1} \quad &\text{if } k \geq 1,
	\end{cases}
\end{align*}
and 
\begin{align*}
	\delta := \min \left\{1- \frac{L_{yx}}{c_{\alpha}}, 1- \left( c_{\alpha} L_{yx} \tau_{0} + 
	2L_{yy} \right)\sigma_{0} \right\}.
\end{align*}
Then the following statements hold.
\begin{enumerate}
\item \label{fact:preliminaries:sigma} 
 {\rm \cite[Proposition~6]{BotCsetnekSedlmayer2022accelerated} }
 $(\forall k \in 	\mathbf{N})$  $\tau_{k+1} \geq 
\frac{\tau_{k}}{\theta_{k+1}}$
and 
$\sigma_{k+1} \geq \frac{\sigma_{k}}{\theta_{k+1} (1+\nu \sigma_{k})}$. Furthermore, 
\begin{align*}
	\frac{1-\delta}{\tau_{k}} \geq \frac{L_{yx}}{\alpha_{k+1}} 
	\quad \text{and} \quad 
	\frac{1-\delta}{\sigma_{k}} \geq L_{yx} \alpha_{k}\theta_{k} + L_{yy}(1+\theta_{k}).
\end{align*}
\item  \label{fact:preliminaries:tk}  
 {\rm \cite[Proposition~6]{BotCsetnekSedlmayer2022accelerated} }
 $(\forall k \in 	\mathbf{N})$ 
$t_{k} =\frac{\theta_{0}}{\theta_{0}\theta_{1}\cdots \theta_{k}} =
 \frac{\tau_{k}}{\tau_{0}}$.
 
 	\item \label{fact:preliminaries:qk} 
 {\rm \cite[Inequality~(17)]{BotCsetnekSedlmayer2022accelerated}} 
Let $(x,y)$ be in $\mathcal{H}_{1} \times \mathcal{H}_{2}$. 
Then for every $k \in \mathbf{N}$,
 \begin{align*}
\scalemath{0.9}{ 	\abs{ \innp{q_{k}, y^{k}-y} } \leq \frac{L_{yx}}{2}\left(\alpha_{k} 
 	\norm{y-y_{k}}^{2} 
 	+ \frac{1}{\alpha_{k}}\norm{x^{k}-x^{k-1}}^{2}\right) 
 	+\frac{L_{yy}}{2} \left(\norm{y-y^{k}}^{2} +\norm{y^{k} -y^{k-1}}^{2}\right)}.
\end{align*}
 \item \label{fact:preliminaries:f} 
 {\rm \cite[Inequality~(18)]{BotCsetnekSedlmayer2022accelerated}} 
Let $(x,y)$ be in $\mathcal{H}_{1} \times \mathcal{H}_{2}$. 
Then $(\forall k \in \mathbf{N})$ 
 $	f(x^{k+1},y) -f(x,y^{k+1}) \leq a_{k}(x,y) -b_{k+1}(x,y) -c_{k}$.
 
 \item  \label{fact:preliminaries:fhatxy} 
 {\rm \cite[Inequality~(24)]{BotCsetnekSedlmayer2022accelerated}} 
Let $(x,y)$ be in $\mathcal{H}_{1} \times \mathcal{H}_{2}$. 

Then $(\forall k \in \mathbf{N} \smallsetminus \{0\})$ 
 $\sum^{k-1}_{i=0} t_{i} \left(  f(\hat{x}_{k}, y) -f(x,\hat{y}_{k})  \right) \leq  
 \frac{t_{0}}{2\tau_{0}} \norm{x -x^{0}}^{2} +\frac{t_{0}}{2\sigma_{0}}
 \norm{y -y^{0}}^{2} 
 -\frac{t_{k}}{2\tau_{k}} \norm{x -x^{k}}^{2} -\frac{t_{k}}{2} \left( \frac{1}{\sigma_{k}} - 
 \theta_{k} \left(L_{yx} \alpha_{k} 
 +L_{yy}\right) \right)\norm{y -y^{k}}^{2}$.
 
\item  \label{fact:preliminaries:bc} 
 {\rm \cite[Inequality~(23) and the expression 
 above]{BotCsetnekSedlmayer2022accelerated} }
Let $(x,y)$ be in $\mathcal{H}_{1} \times \mathcal{H}_{2}$. 
Then $(\forall k \in \mathbf{N})$ $t_{k}b_{k+1}(x,y) \geq t_{k+1}a_{k+1}(x,y)$. 
Furthermore,
\begin{align*}
	(\forall k \in \mathbf{N}) \quad c_{k} \geq \sigma \left( \frac{1}{2\tau_{k}} 
	\norm{x_{k+1}-x_{k}}^{2} +\frac{1}{2\sigma_{k}} \norm{y^{k+1} -y^{k}}^{2} \right) 
	\geq 0.
\end{align*}
\end{enumerate}
\end{fact}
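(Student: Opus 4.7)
The plan is to handle the six items in a dependency-respecting order, treating (i), (ii), and (vi) as mechanical consequences of the defining recursions, and (iii)--(v) as the analytic core. For item (i), the bounds $\tau_{k+1} \geq \tau_k/\theta_{k+1}$ and $\sigma_{k+1} \geq \sigma_k/(\theta_{k+1}(1+\nu\sigma_k))$ follow by plugging in $\theta_{k+1} = 1/\sqrt{1+\nu\sigma_k}$; the two $\delta$-inequalities then reduce to the monotonicity of $(\tau_k)$ and $(\sigma_k)$ combined with the very definition of $\delta$. For item (ii), unrolling the recursion $\tau_k = \tau_{k-1}/\theta_k$ and using $\theta_0 = 1$ gives $t_k = \tau_k/\tau_0$ at once.

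For item (iii), I would apply Cauchy--Schwarz to $\innp{q_k, y^k - y}$, dominate $\norm{q_k}$ by the Lipschitz bound \cref{eq:nablay}, and then split the two resulting products $\norm{x^k - x^{k-1}}\cdot\norm{y^k - y}$ and $\norm{y^k - y^{k-1}}\cdot\norm{y^k - y}$ by Young's inequality with weights $\alpha_k$ and $1$, respectively.

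The heart of the argument is item (iv). My approach is to write first-order optimality conditions for the two proximal updates in \cref{eq:OGAProx}: the $y$-update gives $\frac{1}{\sigma_k}(y^k - y^{k+1}) + (1+\theta_k)\nabla_y\Phi(x^k,y^k) - \theta_k \nabla_y\Phi(x^{k-1},y^{k-1}) \in \partial g(y^{k+1})$, while the $x$-update gives $\frac{1}{\tau_k}(x^k - x^{k+1}) \in \partial_x f(x^{k+1},y^{k+1})$. Testing these against $y - y^{k+1}$ and $x - x^{k+1}$, using strong convexity of $g$ with modulus $\nu$ on the $y$-side and convexity of $f(\cdot, y^{k+1})$ on the $x$-side, and then applying the three-point identity $2\innp{a-b,a-c} = \norm{a-b}^2 + \norm{a-c}^2 - \norm{b-c}^2$, yields quadratic-plus-linear bounds on $g(y^{k+1}) - g(y)$ and $f(x^{k+1},y^{k+1}) - f(x,y^{k+1})$. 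The decisive algebraic move is the rewrite $(1+\theta_k)\nabla_y\Phi(x^k,y^k) - \theta_k \nabla_y\Phi(x^{k-1},y^{k-1}) = \nabla_y\Phi(x^k,y^k) + \theta_k q_k$, so that the optimistic extrapolation produces the term $\theta_k \innp{q_k, y^k - y}$ which can be controlled by item (iii). I expect the main obstacle to be bookkeeping the quadratic remainders so that they assemble precisely into the pattern $a_k(x,y) - b_{k+1}(x,y) - c_k$.

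For item (vi), comparing $t_k b_{k+1}(x,y)$ and $t_{k+1} a_{k+1}(x,y)$ term by term and invoking $t_{k+1}/t_k = 1/\theta_{k+1}$, $\tau_{k+1} = \tau_k/\theta_{k+1}$, $\sigma_{k+1} = \theta_{k+1}\sigma_k$, and the key identity $\theta_{k+1}^2(1+\nu\sigma_k) = 1$ yields the desired inequality, and the lower bound on $c_k$ is then an immediate corollary of item (i). Finally, item (v) follows by multiplying (iv) by $t_k$, summing over $k$, telescoping through (vi), discarding the nonnegative contributions $t_k c_k$ together with the terminal $b$-remainder, and using convexity of $f(\cdot, y)$ and concavity of $f(x, \cdot)$ (the same device as in \cref{fact:fxkykx*y*}) to convert the weighted averages of $f$-values at the iterates into $f(\hat{x}_k, y)$ and $f(x, \hat{y}_k)$.
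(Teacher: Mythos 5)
Your outline is essentially sound, but you should be aware that the paper itself offers no proof of this statement at all: it is stated as a \emph{Fact}, i.e.\ a result imported verbatim from Bo\c{t}--Csetnek--Sedlmayer, with each item carrying an explicit pointer (Proposition~6 for \cref{fact:preliminaries:sigma}--\cref{fact:preliminaries:tk}, Inequalities~(17), (18), (24), (23) for \cref{fact:preliminaries:qk}--\cref{fact:preliminaries:bc}). What you have written is therefore not an alternative to the paper's argument but a reconstruction of the cited reference's proofs, and as such it follows the standard route: the recursions and the definition of $\delta$ for \cref{fact:preliminaries:sigma} (note the second $\delta$-inequality also uses the invariance $\tau_k\sigma_k=\tau_0\sigma_0$ together with $\tau_{k-1}\theta_k\le\tau_k$, not monotonicity alone), Cauchy--Schwarz plus \cref{eq:nablay} plus Young for \cref{fact:preliminaries:qk}, prox optimality conditions, the three-point identity, and the rewrite of the extrapolated gradient as $\nabla_y\Phi(x^k,y^k)+\theta_k q_k$ for \cref{fact:preliminaries:f}, and a term-by-term comparison using $\theta_{k+1}^2(1+\nu\sigma_k)=1$ for \cref{fact:preliminaries:bc}. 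Two small imprecisions: in \cref{fact:preliminaries:f} the gradient-ascent step uses the \emph{convexity} (not concavity) and differentiability of $\Phi(x,\cdot)$ from \cref{assumption}; and for \cref{fact:preliminaries:fhatxy} you cannot simply discard the terminal remainder after telescoping, because the stated inequality retains the negative terms $-\frac{t_k}{2\tau_k}\norm{x-x^k}^2-\frac{t_k}{2}\bigl(\frac{1}{\sigma_k}-\theta_k(L_{yx}\alpha_k+L_{yy})\bigr)\norm{y-y^k}^2$; one must instead keep $-t_ka_k(x,y)$ and apply \cref{fact:preliminaries:qk} to its inner-product term, exactly as the paper later does in the proof of \cref{prop:finequalities}. (The factor $\sigma$ in the displayed lower bound for $c_k$ in \cref{fact:preliminaries:bc} is a typo for $\delta$, which your derivation from \cref{fact:preliminaries:sigma} implicitly corrects.)
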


\begin{proposition}
	\label{prop:finequalities}
		Let $\nu \geq 0$, let $c_{\alpha} > L_{yx} \geq 0$, and 
	let $\theta_{0} =1$, and let $\tau_{0}$ and $\sigma_{0}$ be in $\mathbf{R}_{++}$
	such that 
	\begin{align*}
		\left(c_{\alpha} L_{yx} \tau_{0} +2 L_{yy} \right)\sigma_{0}  <1.
	\end{align*}
	Define
	\begin{align*}
		(\forall k \in \mathbf{N}) \quad \theta_{k+1} := \frac{1}{\sqrt{1+ \nu \sigma_{k}}}, 
		\quad
		\tau_{k+1} := \frac{\tau_{k}}{\theta_{k+1}}, \quad
		\text{and} \quad \sigma_{k+1} := \theta_{k+1}\sigma_{k}.
	\end{align*}
	Set 
	\begin{align*}
		(\forall k \in \mathbf{N}) \quad \alpha_{k} :=
		\begin{cases}
			c_{\alpha} \tau_{0} \quad &\text{if } k =0,\\
			c_{\alpha} \tau_{k-1} \quad &\text{if } k \geq 1,
		\end{cases}
	\end{align*}
	and 
	\begin{align*}
		\delta := \min \left\{1- \frac{L_{yx}}{c_{\alpha}}, 1- \left( c_{\alpha} L_{yx} \tau_{0} + 
		2L_{yy} \right)\sigma_{0} \right\}.
	\end{align*}

	We have the following assertions. 
	\begin{enumerate}
		\item 	\label{prop:finequalities:fxy} 
		Let $(x,y)$ be in $\mathcal{H}_{1} \times \mathcal{H}_{2}$.  
		Then for every $k \in \mathbf{N}$,
		\begin{align*}
			\sum^{k-1}_{i=0} t_{i} \left( f(x^{i+1}, y) -f(x,y^{i+1}) \right)
			\leq \frac{t_{0}}{2\tau_{0}} \norm{x-x^{0}}^{2} +\frac{t_{0}}{2\sigma_{0}} 
			\norm{y-y^{0}}^{2}.
		\end{align*}
		\item  	\label{prop:finequalities:fhatxy} Let $k \in \mathbf{N} \smallsetminus 
		\{0\}$.  
		Then
		\begin{align*}
			 -\frac{t_{0}}{2\tau_{0}} \norm{\hat{x}_{k}-x^{0}}^{2} 
			-\frac{t_{0}}{2\sigma_{0}} 	\norm{y^{*}-y^{0}}^{2} 
		&\leq 	\sum^{k-1}_{i=0} t_{i}  \left(f(\hat{x}_{k},\hat{y}_{k}) -f(x^{*},y^{*}) \right)\\ 
		& \leq 
			\frac{t_{0}}{2\tau_{0}} \norm{x^{*}-x^{0}}^{2} +\frac{t_{0}}{2\sigma_{0}} 
			\norm{\hat{y}_{k}-y^{0}}^{2}.
		\end{align*}
	\end{enumerate}
\end{proposition}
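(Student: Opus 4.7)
The plan is to prove \cref{prop:finequalities:fxy} by summing the per-step estimate of \cref{fact:preliminaries:f} with weights $t_i$, telescoping via \cref{fact:preliminaries:bc}, and checking that the terminal boundary contribution has the right sign; then to obtain \cref{prop:finequalities:fhatxy} by combining \cref{prop:finequalities:fxy} with the saddle-point reduction of \cref{fact:fxkykx*y*}.

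For \cref{prop:finequalities:fxy}, fix $(x,y) \in \mathcal{H}_1 \times \mathcal{H}_2$ and $k \in \mathbf{N}$. The case $k=0$ is immediate since the left-hand side is an empty sum. For $k \geq 1$, I multiply $f(x^{i+1},y) - f(x,y^{i+1}) \leq a_i(x,y) - b_{i+1}(x,y) - c_i$ from \cref{fact:preliminaries:f} by the nonnegative weight $t_i$ and sum over $i \in \{0,\ldots,k-1\}$. Invoking $t_i b_{i+1}(x,y) \geq t_{i+1} a_{i+1}(x,y)$ from \cref{fact:preliminaries:bc}, the $a$/$b$ contributions telescope and yield
\begin{align*}
\sum_{i=0}^{k-1} t_i \left( f(x^{i+1},y) - f(x,y^{i+1})\right) \leq t_0 a_0(x,y) - t_k a_k(x,y) - \sum_{i=0}^{k-1} t_i c_i.
\end{align*}
The last sum is nonnegative, again by \cref{fact:preliminaries:bc}. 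To also drop $-t_k a_k(x,y)$, I will show $t_k a_k(x,y) \geq 0$: using \cref{fact:preliminaries:qk} to absorb the indefinite cross term $\theta_k \innp{q_k, y^k - y}$, the expression $a_k(x,y)$ is bounded below by $\frac{1}{2\tau_k}\norm{x-x^k}^2 + \frac{1}{2}\bigl(\frac{1}{\sigma_k} - \theta_k(L_{yx}\alpha_k + L_{yy})\bigr)\norm{y-y^k}^2$, whose $y$-coefficient is at least $\frac{\delta}{\sigma_k} \geq 0$ by the second estimate in \cref{fact:preliminaries:sigma}. Finally, the initialization $(x^{-1},y^{-1}) = (x^0,y^0)$ forces $q_0 = 0$ and kills the two difference terms in $a_0$, so $t_0 a_0(x,y) = \frac{t_0}{2\tau_0}\norm{x-x^0}^2 + \frac{t_0}{2\sigma_0}\norm{y-y^0}^2$, which closes \cref{prop:finequalities:fxy}.

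For \cref{prop:finequalities:fhatxy}, I multiply \cref{eq:lemma:fxkykx*y*:x} by $\sum_{i=0}^{k-1} t_i$ and apply \cref{prop:finequalities:fxy} at $(x,y) = (x^*, \hat{y}_k)$ to obtain the upper bound; symmetrically, multiplying \cref{eq:lemma:fxkykx*y*:y} by $\sum_{i=0}^{k-1} t_i$ and applying \cref{prop:finequalities:fxy} at $(x,y) = (\hat{x}_k, y^*)$ yields the lower bound. The only nontrivial step is checking nonnegativity of $t_k a_k(x,y)$, but once the quadratic upper bound on $\abs{\innp{q_k, y^k - y}}$ from \cref{fact:preliminaries:qk} is plugged in, it reduces to the step-size conditions already recorded in \cref{fact:preliminaries:sigma}, and the remainder is telescoping bookkeeping.
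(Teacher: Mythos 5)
Your proposal is correct and follows essentially the same route as the paper: weight the per-step estimate of \cref{fact:preliminaries}\cref{fact:preliminaries:f} by $t_i$, telescope via \cref{fact:preliminaries}\cref{fact:preliminaries:bc}, use $q_0=0$ to identify $t_0a_0(x,y)$ with the claimed right-hand side, absorb the cross term in $a_k$ via \cref{fact:preliminaries}\cref{fact:preliminaries:qk} and the step-size bounds of \cref{fact:preliminaries}\cref{fact:preliminaries:sigma} to discard $-t_ka_k(x,y)$, and then feed part \cref{prop:finequalities:fxy} at $(x^*,\hat{y}_k)$ and $(\hat{x}_k,y^*)$ into \cref{fact:fxkykx*y*} for part \cref{prop:finequalities:fhatxy}. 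The only cosmetic difference is that you establish $t_ka_k(x,y)\geq 0$ as a separate claim rather than carrying the terms through the chain of inequalities as the paper does; the underlying estimates are identical.
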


\begin{proof}
	\cref{prop:finequalities:fxy}:
	Let $k \in \mathbf{N}$. 
	In view of \cref{fact:preliminaries}\cref{fact:preliminaries:sigma}, we have that 
	\begin{align}\label{eq:prop:finequalities:fxy:sigma}
 \frac{1}{\sigma_{k}} - \theta_{k} \left(L_{yx} \alpha_{k} +L_{yy}\right)  \geq 
\frac{\sigma}{\sigma_{k}} >0.
	\end{align} 
	Applying \cref{fact:preliminaries}\cref{fact:preliminaries:f}  
	in the first inequality and employing 
	 \cref{fact:preliminaries}\cref{fact:preliminaries:bc} in the second inequality, 
	 we derive that
	 \begin{subequations} \label{eq:prop:finequalities:fxy}
	 	\begin{align}
	 		\sum^{k-1}_{i=0} t_{i} \left( f(x^{i+1}, y) -f(x,y^{i+1}) \right) 
	 		&\leq  \sum^{k-1}_{i=0} t_{i} \left(a_{i}(x,y) -b_{i+1}(x,y) -c_{i} \right)\\
	 		& \leq   \sum^{k-1}_{i=0} t_{i}a_{i}(x,y) -t_{i+1}a_{i+1}(x,y)\\
	 		& =   t_{0}a_{0}(x,y)  -t_{k}a_{k}(x,y). 
	 	\end{align}
	 \end{subequations}
	Furthermore, recalling the definitions of $a_{0}(x,y)$ and $a_{k}(x,y)$
	and the fact $(x^{-1},y^{-1}) =(x^{0},y^{0})$ and 
	$q_{0} :=\nabla_{y}\Phi(x^{0},y^{0}) -\nabla_{y}\Phi(x^{-1},y^{-1})=0$ 
	in the first equality, and applying \cref{fact:preliminaries}\cref{fact:preliminaries:qk} 
	in 	the first 	inequality below, we derive that
	\begin{align*}
		 &t_{0}a_{0}(x,y)  -t_{k}a_{k}(x,y)\\
=&\frac{t_{0}}{2\tau_{0}} \norm{x -x^{0}}^{2} +\frac{t_{0}}{2\sigma_{0}}
		 \norm{y -y^{0}}^{2} 
	-\frac{t_{k}}{2\tau_{k}} \norm{x -x^{k}}^{2} 
	-\frac{t_{k}}{2\sigma_{k}} \norm{y -y^{k}}^{2}\\ 
	&- t_{k}\theta_{k} \innp{q_{k}, y^{k}-y} 
	-t_{k}\theta_{k} \frac{L_{yx}}{2 \alpha_{k}} \norm{x^{k} -x^{k-1}}^{2} 
	-t_{k}\theta_{k} \frac{L_{yy}}{2} \norm{y^{k} -y^{k-1}}^{2} \\	
\leq &
\frac{t_{0}}{2\tau_{0}} \norm{x -x^{0}}^{2} +\frac{t_{0}}{2\sigma_{0}}
\norm{y -y^{0}}^{2} 
-\frac{t_{k}}{2\tau_{k}} \norm{x -x^{k}}^{2} 
-\frac{t_{k}}{2\sigma_{k}} \norm{y -y^{k}}^{2}\\ 
&+ t_{k}\theta_{k} \frac{L_{yx}}{2}\left(\alpha_{k} \norm{y-y_{k}}^{2} 
+ \frac{1}{\alpha_{k}}\norm{x^{k}-x^{k-1}}^{2}\right) 
+ t_{k}\theta_{k} \frac{L_{yy}}{2} \left(\norm{y-y^{k}}^{2} +\norm{y^{k} 
-y^{k-1}}^{2}\right)\\
&	-t_{k}\theta_{k} \frac{L_{yx}}{2 \alpha_{k}} \norm{x^{k} -x^{k-1}}^{2} 
-t_{k}\theta_{k} \frac{L_{yy}}{2} \norm{y^{k} -y^{k-1}}^{2} \\
=&	\frac{t_{0}}{2\tau_{0}} \norm{x -x^{0}}^{2} +\frac{t_{0}}{2\sigma_{0}}
\norm{y -y^{0}}^{2} 
-\frac{t_{k}}{2\tau_{k}} \norm{x -x^{k}}^{2} -\frac{t_{k}}{2} \left( \frac{1}{\sigma_{k}} - 
\theta_{k} \left(L_{yx} \alpha_{k} 
+L_{yy}\right) \right)\norm{y -y^{k}}^{2}\\
\leq &	\frac{t_{0}}{2\tau_{0}} \norm{x -x^{0}}^{2} +\frac{t_{0}}{2\sigma_{0}}
\norm{y -y^{0}}^{2}, 
	\end{align*}
where in the last inequality, we use \cref{eq:prop:finequalities:fxy:sigma}.
Altogether, we have the desired result
\begin{align*}
	\sum^{k-1}_{i=0} t_{i} \left( f(x^{i+1}, y) -f(x,y^{i+1}) \right) 
\leq \frac{t_{0}}{2\tau_{0}} \norm{x -x^{0}}^{2} +\frac{t_{0}}{2\sigma_{0}}
\norm{y -y^{0}}^{2}.	
\end{align*}	
	
	\cref{prop:finequalities:fhatxy}: 
Combine \cref{eq:lemma:fxkykx*y*:x} in \cref{fact:fxkykx*y*} and the result 
	obtained from  \cref{prop:finequalities:fxy} above to observe that
	\begin{align*}
	\sum^{k-1}_{i=0} t_{i} \left( 	 f(\hat{x}_{k},\hat{y}_{k}) -f(x^{*},y^{*}) \right) 
		 \leq &  
		 \sum^{k-1}_{j=0} t_{j} \left( f(x^{j+1}, \hat{y}_{k})  
		 -f(x^{*},y^{j+1}) \right)\\
		 \leq &
		 \frac{t_{0}}{2\tau_{0}} \norm{x^{*}-x^{0}}^{2} +\frac{t_{0}}{2\sigma_{0}} 
		 \norm{\hat{y}_{k}-y^{0}}^{2}.
	\end{align*}
Similarly, applying \cref{eq:lemma:fxkykx*y*:y} in \cref{fact:fxkykx*y*} and the result 
obtained from  \cref{prop:finequalities:fxy} above, we have that 
\begin{align*}
	\sum^{k-1}_{i=0} t_{i} \left( 	 f(\hat{x}_{k},\hat{y}_{k}) -f(x^{*},y^{*}) \right) 
	 \geq &
	 -  \sum^{k-1}_{j=0} t_{j} \left( f(x^{j+1},y^{*})  
	 -f(\hat{x}_{k},y^{j+1}) \right)\\
	 \geq &-\frac{t_{0}}{2\tau_{0}} \norm{\hat{x}_{k}-x^{0}}^{2} 
	 -\frac{t_{0}}{2\sigma_{0}} 	\norm{y^{*}-y^{0}}^{2}.
\end{align*}
Altogether, we obtain the required result. 
\end{proof}

Below, we show the convergence of the OGAProx without strongly convexity or 
concavity assumption. 
In fact, the assumption of \cref{theorem:convexconcave} is the same as that of
\cite[Theorem~9]{BotCsetnekSedlmayer2022accelerated}
which proves the convergence of the sequence of iterations generated by the OGAProx 
and the convergence of the min-max gap evaluated at the associated ergodic 
sequences
under the convex-concave setting. 
\begin{theorem}
	\label{theorem:convexconcave}
		Let  $\nu = 0$, let $c_{\alpha} > L_{yx} \geq 0$, and  let $\tau$ and $\sigma $ 
		be 
		in 
	$\mathbf{R}_{++}$ such that 
	\begin{align*}
		\left(c_{\alpha} L_{yx} \tau +2 L_{yy} \right)\sigma  <1.
	\end{align*}
Let $(\forall k \in \mathbf{N})$ $\tau_{k}\equiv \tau  $, 
$\sigma_{k}\equiv \sigma $, 
and $\theta_{k}\equiv 1$. 
	Then for every $k \in \mathbf{N} \smallsetminus \{0\}$,
\begin{align*}
	- \frac{1}{k} \left( \frac{t_{0}}{2\tau_{0}} \norm{\hat{x}_{k}-x^{0}}^{2} 
	+\frac{t_{0}}{2\sigma_{0}} 	\norm{y^{*}-y^{0}}^{2} \right)
	&\leq 	f(\hat{x}_{k},\hat{y}_{k}) -f(x^{*},y^{*}) \\ 
	& \leq \frac{1}{k}
	\left( \frac{t_{0}}{2\tau_{0}} \norm{x^{*}-x^{0}}^{2} +\frac{t_{0}}{2\sigma_{0}} 
	\norm{\hat{y}_{k}-y^{0}}^{2} \right).
\end{align*}
	Consequently, $\left( f(\hat{x}^{k+1}, \hat{y}^{k+1}) \right)_{k \in \mathbf{N}}$ 
	converges to 
	$f(x^{*},y^{*})$ with a convergence rate of
	order $\mathcal{O}\left(\frac{1}{k}\right)$.
\end{theorem}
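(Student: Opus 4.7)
The plan is to reduce the theorem directly to \cref{prop:finequalities}\cref{prop:finequalities:fhatxy} by verifying that the constant step-size choice in the hypothesis is a legitimate specialisation of the parameter scheme in \cref{prop:finequalities}, then dividing through by the normalising sum $\sum_{i=0}^{k-1} t_i$.

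First I would check compatibility: with $\nu=0$ the recursion $\theta_{k+1}:=1/\sqrt{1+\nu\sigma_k}$ collapses to $\theta_{k+1}=1$, whence $\tau_{k+1}=\tau_k/1=\tau_k$ and $\sigma_{k+1}=1\cdot\sigma_k=\sigma_k$, so the constant sequences $\tau_k\equiv\tau$, $\sigma_k\equiv\sigma$, $\theta_k\equiv 1$ are genuinely of the form required by \cref{prop:finequalities} with the identifications $\tau_0=\tau$ and $\sigma_0=\sigma$, and the strict inequality $\left(c_{\alpha}L_{yx}\tau+2L_{yy}\right)\sigma<1$ transfers verbatim. Using $\theta_0=1$ together with \cref{fact:preliminaries}\cref{fact:preliminaries:tk}, I would then read off that $t_k=\theta_0/(\theta_0\theta_1\cdots\theta_k)=1$ for every $k\in\mathbf{N}$, so that $\sum_{i=0}^{k-1}t_i=k$.

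With these two observations in hand, the sandwich inequality is immediate: I would divide the bound established in \cref{prop:finequalities}\cref{prop:finequalities:fhatxy} by $k$, and, after substituting $t_0=1$, $\tau_0=\tau$, and $\sigma_0=\sigma$, read off the claimed two-sided estimate for $f(\hat{x}_k,\hat{y}_k)-f(x^*,y^*)$.

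The only delicate point is converting this estimate into an honest $\mathcal{O}(1/k)$ convergence rate, since both sides of the sandwich still depend on the moving ergodic averages $\hat{x}_k$ and $\hat{y}_k$ through $\norm{\hat{x}_k-x^0}$ and $\norm{\hat{y}_k-y^0}$. I expect this to be the main (but mild) obstacle. To dispose of it I would invoke \cite[Theorem~9]{BotCsetnekSedlmayer2022accelerated}, which under exactly these hypotheses guarantees that the iterate sequence $\left((x^k,y^k)\right)_{k\in\mathbf{N}}$ is bounded; since $\hat{x}_k$ and $\hat{y}_k$ are convex combinations of the iterates (with weights $t_j/\sum_{i=0}^{k-1}t_i=1/k$), the sequences $(\hat{x}_k)_{k}$ and $(\hat{y}_k)_{k}$ are also bounded, and therefore both $\left(\norm{\hat{x}_k-x^0}^2\right)_{k}$ and $\left(\norm{\hat{y}_k-y^0}^2\right)_{k}$ stay bounded. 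This makes the right- and left-hand sides of the sandwich genuinely $\mathcal{O}(1/k)$, yielding $f(\hat{x}_k,\hat{y}_k)\to f(x^*,y^*)$ at the advertised rate.
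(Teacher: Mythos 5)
Your proposal is correct and follows essentially the same route as the paper's proof: reduce to \cref{prop:finequalities}\cref{prop:finequalities:fhatxy}, note that $\theta_k\equiv 1$ forces $t_k\equiv 1$ and hence $\sum_{i=0}^{k-1}t_i=k$, divide by $k$, and use the boundedness of the iterates from \cite[Theorem~9]{BotCsetnekSedlmayer2022accelerated} to make the two-sided bound a genuine $\mathcal{O}\left(\frac{1}{k}\right)$ estimate. The only cosmetic difference is that you verify the compatibility of the constant parameters with the recursion of \cref{prop:finequalities} by direct computation, whereas the paper cites \cite[Proposition~8]{BotCsetnekSedlmayer2022accelerated} for the same point.
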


\begin{proof}
	In view of \cite[Proposition~8]{BotCsetnekSedlmayer2022accelerated},
	the assumptions above on the parameters $\left(\sigma_{k}\right)_{k \in \mathbf{N}}$,
	$\left(\tau_{k}\right)_{k \in \mathbf{N}}$, and $\left(\theta_{k}\right)_{k \in 
		\mathbf{N}}$ are exactly the requirements of the parameters 
	in \cref{prop:finequalities} when $\nu =0$.
	
	According to \cite[Theorem~9]{BotCsetnekSedlmayer2022accelerated},
	$\left( (x^{k},y^{k}) \right)_{k \in \mathbf{N}}$ weakly converges to $(x^{*},y^{*})$,
	which implies that $\left( (x^{k},y^{k}) \right)_{k \in \mathbf{N}}$  is bounded. 
	Due to \cref{eq:hatxy}, 
	the boundedness of $\left( (x^{k},y^{k}) \right)_{k \in \mathbf{N}}$ 
	guarantees the boundedness   of $\left( (\hat{x}^{k+1}, \hat{y}^{k+1}) \right)_{k \in 
	\mathbf{N}}$
	
	Because $(\forall k \in \mathbf{N})$  $\theta_{k}\equiv 1$, 
	we know that $(\forall k \in \mathbf{N})$ $t_{k} = 
	\frac{\theta_{0}}{\theta_{0}\theta_{1}\cdots 
		\theta_{k}} \equiv 1$. Hence, we have that 
	\begin{align*}
		(\forall k \in \mathbf{N} \smallsetminus \{0\}) \quad 
\sum^{k-1}_{j=0} t_{j} =k.
	\end{align*}
Combine this result with \cref{prop:finequalities}\cref{prop:finequalities:fhatxy}
to deduce that
	\begin{align*}
	-\frac{t_{0}}{2\tau_{0}} \norm{\hat{x}_{k}-x^{0}}^{2} 
	-\frac{t_{0}}{2\sigma_{0}} 	\norm{y^{*}-y^{0}}^{2} 
	&\leq 	k \left(f(\hat{x}_{k},\hat{y}_{k}) -f(x^{*},y^{*}) \right)\\ 
	& \leq 
	\frac{t_{0}}{2\tau_{0}} \norm{x^{*}-x^{0}}^{2} +\frac{t_{0}}{2\sigma_{0}} 
	\norm{\hat{y}_{k}-y^{0}}^{2},
\end{align*}
which, combining with the boundedness of $\left( (\hat{x}^{k+1}, \hat{y}^{k+1}) 
\right)_{k \in \mathbf{N}}$, ensures the required results. 
\end{proof}

In the following result, we present the convergence of the OGAProx with 
the associated function being convex-strongly concave. 
Note that the assumption of \cref{theorem:convexstronglyconcave}
is exactly the same as \cite[Theorem~12]{BotCsetnekSedlmayer2022accelerated}
which shows the convergence of the sequence of iterations generated by 
the OGAProx 
and the convergence of the min-max gap evaluated at the associated ergodic 
sequences
under the convex-strongly concave setting. 
\begin{theorem}
	\label{theorem:convexstronglyconcave}
Let $\nu >0$,	let $c_{\alpha} > L_{yx} \geq 0$, let $\theta_{0}=1$,
	and  let $\tau_{0}$ and $\sigma_{0} $ be 
	in 
	$\mathbf{R}_{++}$ such that 
	\begin{align*}
		\left(c_{\alpha} L_{yx} \tau_{0} +2 L_{yy} \right)\sigma_{0}  <1 
		\quad \text{and} \quad
		0 < \sigma_{0} \leq \frac{9 +3\sqrt{13}}{2 \nu}.
	\end{align*}
Define
\begin{align*}
	(\forall k \in \mathbf{N}) \quad \theta_{k+1} := \frac{1}{\sqrt{1+ \nu \sigma_{k}}}, \quad
	\tau_{k+1} := \frac{\tau_{k}}{\theta_{k+1}}, \quad
	\text{and} \quad \sigma_{k+1} := \theta_{k+1}\sigma_{k}.
\end{align*} 
	The following results hold. 
	\begin{enumerate}
		\item \label{theorem:convexstronglyconcave:bounded}
		$\left( (x^{k},y^{k}) \right)_{k \in \mathbf{N}}$ 
		and $\left( (\hat{x}^{k+1}, \hat{y}^{k+1}) \right)_{k \in \mathbf{N}}$  are bounded. 
		\item \label{theorem:convexstronglyconcave:convergence}
		For every $k \in \mathbf{N} \smallsetminus \{0\}$,
		\begin{align*}
			- \frac{6}{\nu \sigma_{0} k^{2}} \left(\frac{1}{\tau_{0}} 
		\norm{\hat{x}_{k}-x^{0}}^{2} 
		-\frac{1}{\sigma_{0}} 	\norm{y^{*}-y^{0}}^{2}  \right)
		&\leq 	 f(\hat{x}_{k},\hat{y}_{k}) -f(x^{*},y^{*})  \\ 
		& \leq 
		\frac{6}{\nu \sigma_{0} k^{2}}
		\left(	\frac{1}{\tau_{0}} \norm{x^{*}-x^{0}}^{2} +\frac{1}{\sigma_{0}} 
		\norm{\hat{y}_{k}-y^{0}}^{2} \right).
		\end{align*}
		Consequently, $\left( f(\hat{x}^{k+1}, \hat{y}^{k+1}) \right)_{k \in \mathbf{N}}$ 
		converges to 
		$f(x^{*},y^{*})$ with a convergence rate order 
	$\mathcal{O}\left(\frac{1}{k^{2}}\right)$.
	\end{enumerate}
\end{theorem}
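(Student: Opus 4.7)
The plan is to follow the same template as \cref{theorem:convexconcave}: combine \cref{prop:finequalities}\cref{prop:finequalities:fhatxy} with a lower bound on $\sum_{i=0}^{k-1} t_i$ and the boundedness of the ergodic sequences. The only essential change is that here $(t_k)_{k \in \mathbf{N}}$ is no longer constant, and its partial sums grow quadratically rather than linearly in $k$. For \cref{theorem:convexstronglyconcave:bounded}, I would simply invoke \cite[Theorem~12]{BotCsetnekSedlmayer2022accelerated}, which under the stated parameter restrictions establishes weak convergence (hence boundedness) of $\left((x^k,y^k)\right)_{k \in \mathbf{N}}$ to a saddle-point. Boundedness of $\left((\hat{x}_k,\hat{y}_k)\right)_{k \in \mathbf{N}\smallsetminus\{0\}}$ is then automatic, since each term is a convex combination of the tail of a bounded sequence.

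For \cref{theorem:convexstronglyconcave:convergence}, the parameter hypotheses coincide exactly with those required in \cref{prop:finequalities}, so I would apply \cref{prop:finequalities}\cref{prop:finequalities:fhatxy} directly. This yields, for every $k \in \mathbf{N} \smallsetminus \{0\}$,
\[
-\frac{t_0}{2\tau_0}\norm{\hat{x}_k-x^0}^2 - \frac{t_0}{2\sigma_0}\norm{y^*-y^0}^2 \le \left(\sum_{i=0}^{k-1}t_i\right)\bigl(f(\hat{x}_k,\hat{y}_k) - f(x^*,y^*)\bigr) \le \frac{t_0}{2\tau_0}\norm{x^*-x^0}^2 + \frac{t_0}{2\sigma_0}\norm{\hat{y}_k-y^0}^2.
\]
Dividing by $\sum_{i=0}^{k-1}t_i$ and combining with the estimate $\sum_{i=0}^{k-1}t_i \ge \nu\sigma_0 k^2/6$ produces exactly the two-sided bound asserted in the theorem (recalling $t_0 = 1$ since $\theta_0 = 1$). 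The $\mathcal{O}(1/k^2)$ conclusion then follows from the boundedness established in \cref{theorem:convexstronglyconcave:bounded}, which keeps the right-hand side of the upper estimate and the left-hand side of the lower estimate uniformly bounded in $k$.

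The main obstacle is therefore the quadratic lower bound $\sum_{i=0}^{k-1} t_i \ge \nu\sigma_0 k^2/6$. An immediate consequence of the update rules is the invariance $\tau_{k+1}\sigma_{k+1} = \tau_k \sigma_k$, so \cref{fact:preliminaries}\cref{fact:preliminaries:tk} yields $t_k = \tau_k/\tau_0 = \sigma_0/\sigma_k$ and reduces the task to bounding $\sigma_k$ from above. The recursion $1/\sigma_{k+1}^2 = 1/\sigma_k^2 + \nu/\sigma_k$ shows that $1/\sigma_k$ grows at least linearly in $k$; the upper bound $\sigma_0 \le (9+3\sqrt{13})/(2\nu)$ is precisely the threshold needed for an inductive estimate of the form $1/\sigma_k \ge c\nu k$ to go through with a constant sharp enough to give the required bound on $\sum t_i$ after summation. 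This quadratic growth of the weights is exactly the ingredient driving the $\mathcal{O}(1/k^2)$ rate on the min-max gap in \cite[Theorem~12]{BotCsetnekSedlmayer2022accelerated}, and I would appeal to the corresponding estimate from that proof rather than rederive it from scratch.
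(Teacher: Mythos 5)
Your treatment of \cref{theorem:convexstronglyconcave:convergence} matches the paper's proof: apply \cref{prop:finequalities}\cref{prop:finequalities:fhatxy}, divide by $\sum_{i=0}^{k-1}t_i$, and import the quadratic growth of the weights from \cite[Proposition~11 and Inequality~(39)]{BotCsetnekSedlmayer2022accelerated}. (A minor calibration point: the paper only establishes $\sum_{i=0}^{k-1}t_i \geq \frac{\nu\sigma_0}{6}k(k-1) \geq \frac{\nu\sigma_0}{12}k^2$, which is what produces the constant $\frac{6}{\nu\sigma_0 k^2}$ after the factors of $\tfrac12$; your stated bound $\sum_{i=0}^{k-1}t_i \geq \frac{\nu\sigma_0}{6}k^2$ is stronger than what that estimate delivers and is not obviously available, e.g.\ at $k=1$ it would require $\nu\sigma_0 \leq 6$ while the hypothesis only gives $\nu\sigma_0 \leq \frac{9+3\sqrt{13}}{2} \approx 9.9$.)

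The genuine gap is in \cref{theorem:convexstronglyconcave:bounded}. You propose to get boundedness of $\left((x^k,y^k)\right)_{k\in\mathbf{N}}$ by citing \cite[Theorem~12]{BotCsetnekSedlmayer2022accelerated} for weak convergence of the full iterate sequence. In the convex--strongly concave regime with these adaptive step sizes, that theorem controls only the strongly concave block --- it gives a rate for $\norm{y^k-y^*}$ and for the ergodic minimax gap --- and does not assert convergence (or even boundedness) of $(x^k)_{k\in\mathbf{N}}$; this is precisely why the present paper, unlike in \cref{theorem:convexconcave} where \cite[Theorem~9]{BotCsetnekSedlmayer2022accelerated} is invoked, proves boundedness directly. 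The paper's argument is: evaluate \cref{fact:preliminaries}\cref{fact:preliminaries:fhatxy} at $(x,y)=(x^*,y^*)$, use the saddle-point property to see the left-hand side is nonnegative, and rearrange to get
\begin{align*}
\frac{t_{k}}{2\tau_{k}} \norm{x^{*} -x^{k}}^{2}
+\frac{t_{k}}{2} \left( \frac{1}{\sigma_{k}} -  \theta_{k} \left(L_{yx} \alpha_{k}
+L_{yy}\right) \right)\norm{y^{*} -y^{k}}^{2}
\leq \frac{1}{2\tau_{0}} \norm{x^{*} -x^{0}}^{2} +\frac{1}{2\sigma_{0}}\norm{y^{*} -y^{0}}^{2};
\end{align*}
since $\frac{t_k}{\tau_k}=\frac{1}{\tau_0}$ by \cref{fact:preliminaries}\cref{fact:preliminaries:tk} and the coefficient of $\norm{y^*-y^k}^2$ is nonnegative (indeed grows like $k^2$), boundedness of $(x^k)$ and $(y^k)$, hence of the ergodic averages, follows. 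Without this (or a correct external citation for boundedness of the $x$-iterates), your proof of \cref{theorem:convexstronglyconcave:convergence} is also incomplete, because the asserted $\mathcal{O}(1/k^2)$ rate needs $\norm{\hat{x}_k-x^0}$ and $\norm{\hat{y}_k-y^0}$ to be uniformly bounded.
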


\begin{proof}
\cref{theorem:convexstronglyconcave:bounded}:	Let $k \in \mathbf{N} \smallsetminus 
\{0\}$.
Because $(x^{*},y^{*})$ is a saddle-point of $f$,	
in view of \cref{fact:preliminaries}\cref{fact:preliminaries:fhatxy}, we have that 
$0 \leq \sum^{k-1}_{i=0} t_{i} \left(  f(\hat{x}_{k}, y^{*}) -f(x^{*},\hat{y}_{k})  \right) \leq  
\frac{t_{0}}{2\tau_{0}} \norm{x^{*} -x^{0}}^{2} +\frac{t_{0}}{2\sigma_{0}}
\norm{y^{*} -y^{0}}^{2} 
-\frac{t_{k}}{2\tau_{k}} \norm{x^{*} -x^{k}}^{2} 
-\frac{t_{k}}{2} \left( \frac{1}{\sigma_{k}} - 
\theta_{k} \left(L_{yx} \alpha_{k} 
+L_{yy}\right) \right)\norm{y^{*} -y^{k}}^{2}$,
which, combining with the fact  $t_{0}=1$,  implies that 
	 \begin{align}\label{eq:theorem:convexstronglyconcave}
	   \frac{1}{2\tau_{0}} \norm{x^{*} -x^{0}}^{2} +\frac{1}{2\sigma_{0}}
	 	 \norm{y^{*} -y^{0}}^{2}  \geq 
	 	 \frac{t_{k}}{2\tau_{k}} \norm{x^{*} -x^{k}}^{2} 
	 	 +\frac{t_{k}}{2} \left( \frac{1}{\sigma_{k}} -  \theta_{k} \left(L_{yx} \alpha_{k} 
	 	 +L_{yy}\right) \right)\norm{y^{*} -y^{k}}^{2}.
	 \end{align}
 In view of 
 \cref{fact:preliminaries}\cref{fact:preliminaries:sigma}$\&$\cref{fact:preliminaries:tk}, 
 we know that $ \frac{1}{\sigma_{k}} -  \theta_{k} \left(L_{yx} \alpha_{k} 
 +L_{yy}\right) \geq \frac{\delta}{\sigma_{k}}$ and 
 $\frac{t_{k}}{\tau_{k}} =\frac{1}{\tau_{0}}$. 
 Moreover, via \cite[Proposition~11]{BotCsetnekSedlmayer2022accelerated},
 we know that $\frac{t_{k}}{\sigma_{k}} = \frac{t_{k}}{\tau_{k}} \frac{\tau_{k}}{\sigma_{k} 
 } \geq \frac{1}{\tau_{0}} \frac{\nu^{2} \tau_{0} \sigma_{0}}{9} k^{2} $. 
 Combine these results with \cref{eq:theorem:convexstronglyconcave}
 to derive that
 \begin{align*}
 	 &\frac{1}{2\tau_{0}} \norm{x^{*} -x^{0}}^{2} +\frac{1}{2\sigma_{0}}
 	\norm{y^{*} -y^{0}}^{2}  \\
 	\geq & 
 	\frac{t_{k}}{2\tau_{k}} \norm{x^{*} -x^{k}}^{2} 
 	+\frac{t_{k}}{2} \left( \frac{1}{\sigma_{k}} -  \theta_{k} \left(L_{yx} \alpha_{k} 
 	+L_{yy}\right) \right)\norm{y^{*} -y^{k}}^{2}\\
 	\geq &
 	\frac{1}{2\tau_{0}} \norm{x^{*} -x^{k}}^{2} 
 	+ \frac{t_{k}}{2} \frac{\delta}{\sigma_{k}} \norm{y^{*} -y^{k}}^{2}\\
 	\geq & \frac{1}{2\tau_{0}} \norm{x^{*} -x^{k}}^{2} 
 	+  \frac{1}{\tau_{0}} \frac{\nu^{2} \tau_{0} \sigma_{0}}{9} k^{2} \norm{y^{*} 
 	-y^{k}}^{2},
 \end{align*}
which, via \cref{eq:hatxy},  implies that $\left( (x^{k},y^{k}) \right)_{k \in \mathbf{N}}$ 
and $\left( (\hat{x}^{k+1}, \hat{y}^{k+1}) \right)_{k \in \mathbf{N}}$  are bounded. 

\cref{theorem:convexstronglyconcave:convergence}: 
Applying  \cref{fact:preliminaries}\cref{fact:preliminaries:tk} again and 
\cite[Inequality~(39)]{BotCsetnekSedlmayer2022accelerated}, we know that
\begin{align*}
	\sum^{k-1}_{i=0} t_{k} =\frac{1}{\tau_{0}} \sum^{k-1}_{i=0} \tau_{k} 
	\geq \frac{1}{\tau_{0}}  \frac{\nu \tau_{0}\sigma_{0}}{3}\sum^{k-1}_{i=0} k= 
	\frac{\nu \sigma_{0}}{6}  k(k-1).
\end{align*} 
Clearly, if $k \in \mathbf{N} \smallsetminus \{0,1\}$, then $k \geq 2$,  
$k-1 \geq \frac{k}{2}$, 
and $\frac{\nu \sigma_{0}}{6}  k(k-1) \geq \frac{\nu \sigma_{0}}{12}  k^{2}$. 
Hence, we have that 
\begin{align*}
\left(\forall k \in \mathbf{N} \smallsetminus \{0,1\} \right) \quad 	
\frac{1}{\sum^{k-1}_{i=0} t_{k}} \leq \frac{12}{\nu \sigma_{0}  k^{2}}.
\end{align*} 
Combine results above  
with \cref{prop:finequalities}\cref{prop:finequalities:fhatxy}
to obtain that
\begin{align*}
	  f(\hat{x}_{k},\hat{y}_{k}) -f(x^{*},y^{*})  
	& \leq \frac{1}{\sum^{k-1}_{i=0} t_{i}}
	\left( \frac{t_{0}}{2\tau_{0}} \norm{x^{*}-x^{0}}^{2} +\frac{t_{0}}{2\sigma_{0}} 
	\norm{\hat{y}_{k}-y^{0}}^{2} \right)\\
	&\leq \frac{12}{\nu \sigma_{0}  k^{2}}
		\left( \frac{t_{0}}{2\tau_{0}} \norm{x^{*}-x^{0}}^{2} +\frac{t_{0}}{2\sigma_{0}} 
	\norm{\hat{y}_{k}-y^{0}}^{2} \right)
\end{align*}
and 
\begin{align*}
	 f(\hat{x}_{k},\hat{y}_{k}) -f(x^{*},y^{*})
	 &\geq -\frac{1}{\sum^{k-1}_{i=0} t_{i}} 
	 \left(  \frac{t_{0}}{2\tau_{0}} \norm{\hat{x}_{k}-x^{0}}^{2} 
	 +\frac{t_{0}}{2\sigma_{0}} 	\norm{y^{*}-y^{0}}^{2} \right)\\
	 &\geq -\frac{12}{\nu \sigma_{0}  k^{2}}
	 \left(  \frac{t_{0}}{2\tau_{0}} \norm{\hat{x}_{k}-x^{0}}^{2} 
	 +\frac{t_{0}}{2\sigma_{0}} 	\norm{y^{*}-y^{0}}^{2} \right).
\end{align*}
Recall that $t_{0}=1$. Therefore, we obtain that 
\begin{align*}
	- \frac{6 }{\nu \sigma_{0} k^{2}} \left(\frac{1}{\tau_{0}} 
	\norm{\hat{x}_{k}-x^{0}}^{2} 
	-\frac{1}{\sigma_{0}} 	\norm{y^{*}-y^{0}}^{2}  \right)
	&\leq 	 f(\hat{x}_{k},\hat{y}_{k}) -f(x^{*},y^{*})  \\ 
	& \leq 
	\frac{6 }{\nu \sigma_{0} k^{2}}
\left(	\frac{1}{\tau_{0}} \norm{x^{*}-x^{0}}^{2} +\frac{1}{\sigma_{0}} 
	\norm{\hat{y}_{k}-y^{0}}^{2} \right),
\end{align*}
which, combining with the boundedness of $\left( (\hat{x}^{k+1}, \hat{y}^{k+1}) 
\right)_{k \in \mathbf{N}}$, ensures the required results. 
\end{proof}

\subsection{Strongly Convex-Strongly Concave Setting}

In this subsection, we assume additionally that 
$(\forall y \in \dom g)$ $\Phi(\cdot, y) : \mathcal{H}_{1} \to \mathbf{R} \cup \{+\infty\}$
is $\mu$-strongly convex with $\mu >0$ and that the function $g$ is convex with 
modulus $\nu >0$.
That means we assume that the function 
$\left(\forall (x,y) \in \mathcal{H}_{1} \times \mathcal{H}_{2}\right)$ $f(x,y)=\Phi(x,y) 
-g(y)$  is strongly convex-strongly concave in this subsection.

\begin{lemma}
	\label{lemma:sconvexsconcave}
	Let $(\forall k \in \mathbf{N})$ $\sigma_{k} \equiv \sigma \in \mathbf{R}_{++}$, 
	$\tau_{k} \equiv \tau \in \mathbf{R}_{++}$, and 
	$\theta_{k} \equiv \theta \in (0,1)$ such that 
	\begin{align*}
		 1 +\mu \tau = \frac{1}{\theta} \quad \text{and} \quad 1+\nu \sigma =\frac{1}{\theta}.
	\end{align*}
Suppose that there exists $\alpha \in \mathbf{R}_{++}$ such that
\begin{align}\label{eq:lemma:sconvexsconcave}
	\frac{L_{yx}}{\alpha} \leq \frac{1}{\tau}, 
	\quad L_{yy} \leq \frac{1 -\theta \sigma (\alpha L_{yx + L_{yy}})}{\sigma}, 
	\quad \text{and} \quad  
	1-\theta \sigma (\alpha L_{yx} +L_{yy}) >0.
\end{align}
Set $\tilde{\sigma}:= \frac{\sigma}{1 -\theta \sigma (\alpha L_{yx} +L_{yy})}$. The 
following statements hold. 
\begin{enumerate}
	\item \label{lemma:sconvexsconcave:tk}
	$(\forall k \in \mathbf{N})$ $t_{k} =\frac{1}{\theta^{k}}$.
	
	\item \label{lemma:sconvexsconcave:sum} 
	 Let $(x,y)$ be in $\mathcal{H}_{1} \times 	\mathcal{H}_{2}$ and  
	 let $k \in \mathbf{N} \smallsetminus \{0\}$. Then we have that
	\begin{align*}
		&\sum^{k-1}_{i=0} t_{i} \left(f(x^{k+1},y) -f(x,y^{k+1}) \right) \\
		\leq & \frac{1}{2\tau} \norm{x -x^{0}}^{2} + \frac{1}{2\sigma} \norm{y -y^{0}}^{2} 
		-\frac{1}{\theta^{k}} \frac{1}{2 \tau} \norm{x -x^{k}}^{2} -\frac{1}{\theta^{k}}\frac{1 
		-\theta \sigma(\alpha L_{yx}+L_{yy})}{2 \sigma} \norm{y -y^{k}}^{2}\\
	&-\frac{1}{2\theta^{k-1}} \left(\frac{1}{\tau} -\frac{L_{yx}}{\alpha}\right)\norm{x^{k} 
	-x^{k-1}}^{2}
	-\frac{1}{2\theta^{k-1}} \left( \frac{1}{\tilde{\sigma}} -L_{yy}\right) \norm{y^{k} 
	-y^{k-1}}^{2}.
	\end{align*}  
	\item \label{lemma:sconvexsconcave:leq} 
	Let $(x,y)$ be in $\mathcal{H}_{1} \times 	\mathcal{H}_{2}$ and  
	let $k \in \mathbf{N} \smallsetminus 	\{0\}$. Then we have that
	\begin{align*}
		&\sum^{k-1}_{i=0} t_{i} \left(f(x^{k+1},y) -f(x,y^{k+1}) \right) \\
		\leq &\sum^{k-1}_{i=0} t_{i} \left(f(x^{k+1},y) -f(x,y^{k+1}) \right) 
		+\frac{1}{\theta^{k}} \frac{1}{2 \tau} \norm{x -x^{k}}^{2} 
		+\frac{1}{\theta^{k}}\frac{1}{2 \tilde{\sigma}} \norm{y -y^{k}}^{2}\\
		\leq & \frac{1}{2\tau} \norm{x -x^{0}}^{2} + \frac{1}{2\sigma} \norm{y -y^{0}}^{2}. 
	\end{align*}  
\end{enumerate}
\end{lemma}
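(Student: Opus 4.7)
The proof breaks into three independent pieces. Part (i) is immediate from $t_k = \theta_0/(\theta_0\theta_1\cdots\theta_k)$ under $\theta_k \equiv \theta$. Part (iii) is a trivial rearrangement of (ii), so the real work is part (ii), which I would handle by upgrading \cref{fact:preliminaries}\cref{fact:preliminaries:f} to exploit the $\mu$-strong convexity of $\Phi(\cdot, y)$ and then telescoping with weights $t_k$.

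For the upgrade, the proximal optimality of \cref{eq:OGAProx:x} combined with the $\mu$-strong convexity of $\Phi(\cdot, y^{k+1})$ gives the strengthened one-step inequality $f(x^{k+1},y) - f(x, y^{k+1}) \leq a_k(x,y) - \tilde{b}_{k+1}(x,y) - c_k$, where $\tilde{b}_{k+1}$ is obtained from $b_{k+1}$ by replacing $\tfrac{1}{2\tau_k}$ with $\tfrac{1 + \mu\tau_k}{2\tau_k}$ on the $\norm{x - x^{k+1}}^2$ term (exactly parallel to how $\nu$ already appears in $b_{k+1}$ on the $y$-term). The parameter conditions $1+\mu\tau = 1/\theta$ and $1+\nu\sigma = 1/\theta$, together with part (i), then yield the exact identity $t_k \tilde{b}_{k+1}(x,y) = t_{k+1} a_{k+1}(x,y)$, so that summing $t_i$ times the strengthened one-step inequality from $i=0$ to $k-1$ telescopes cleanly to $t_0 a_0(x,y) - t_k a_k(x,y) - \sum_{i=0}^{k-1} t_i c_i$. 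To reach the claimed bound I would then simplify $t_0 a_0$ using $(x^{-1}, y^{-1}) = (x^0, y^0)$ and $q_0 = 0$; apply \cref{fact:preliminaries}\cref{fact:preliminaries:qk} to dominate the term $-\theta \innp{q_k, y^k - y}$ sitting inside $-t_k a_k$, so that the cross-difference contributions in $a_k$ cancel exactly and only the expected $\norm{x - x^k}^2$ and $\norm{y - y^k}^2$ remnants survive (with coefficient $\tfrac{1 - \theta\sigma(\alpha L_{yx}+L_{yy})}{\sigma} = \tfrac{1}{\tilde{\sigma}}$ on the $y$-term); and retain only the single summand $-t_{k-1} c_{k-1}$ from $-\sum_{i} t_i c_i$, whose coefficients match the claim since $\tfrac{1}{\sigma} - L_{yy} - \theta(\alpha L_{yx} + L_{yy}) = \tfrac{1}{\tilde{\sigma}} - L_{yy}$, discarding all other terms, which are nonpositive contributions by virtue of \cref{eq:lemma:sconvexsconcave}.

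Part (iii) then follows by pure algebra: the first inequality is trivial because we are adding two nonnegative quantities to the left-hand side; the second follows by adding those same quantities to the right-hand side of (ii), where they cancel the matching $-\tfrac{1}{\theta^k}\tfrac{1}{2\tau}\norm{x-x^k}^2$ and $-\tfrac{1}{\theta^k}\tfrac{1}{2\tilde{\sigma}}\norm{y-y^k}^2$, and then dropping the remaining two nonnegative quadratic-difference terms. The main obstacle I foresee is justifying the strongly-convex strengthening of \cref{fact:preliminaries}\cref{fact:preliminaries:f}: \cref{fact:preliminaries} is stated under the assumptions of the previous subsection, where $\Phi(\cdot, y)$ is only convex, so one must either cite the corresponding inequality from the strongly-convex part of \cite{BotCsetnekSedlmayer2022accelerated} or rederive their inequality (18) while inserting the subgradient inequality for the $\mu$-strongly convex function $\Phi(\cdot, y^{k+1})$ at the step where the $x$-update is analyzed. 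Once that strengthened descent inequality is in hand, everything else reduces to careful but routine bookkeeping of the telescoping sums.
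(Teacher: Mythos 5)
Your proposal is correct, and its overall structure matches the paper's: part (i) is the same one-line computation, and part (iii) is the same rearrangement the paper uses, namely adding $\frac{1}{\theta^{k}}\frac{1}{2\tau}\norm{x-x^{k}}^{2}+\frac{1}{\theta^{k}}\frac{1}{2\tilde{\sigma}}\norm{y-y^{k}}^{2}$ to both sides of (ii), noting that $\frac{1}{\tilde{\sigma}}=\frac{1-\theta\sigma(\alpha L_{yx}+L_{yy})}{\sigma}$ so these cancel the matching negative terms, and discarding the last two terms, which are nonpositive by \cref{eq:lemma:sconvexsconcave}. The only real divergence is in part (ii): the paper disposes of it in one sentence by combining part (i) with a citation of \cite[Inequality~(46)]{BotCsetnekSedlmayer2022accelerated}, whereas you rederive that inequality. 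Your reconstruction --- strengthen the one-step estimate of \cref{fact:preliminaries}\cref{fact:preliminaries:f} using the $\mu$-strong convexity of $\Phi(\cdot,y^{k+1})$ so that the coefficient of $\norm{x-x^{k+1}}^{2}$ becomes $\frac{1+\mu\tau}{2\tau}$, observe that the couplings $1+\mu\tau=1+\nu\sigma=\frac{1}{\theta}$ turn $t_{i}\tilde{b}_{i+1}=t_{i+1}a_{i+1}$ into an exact identity so the weighted sum telescopes to $t_{0}a_{0}-t_{k}a_{k}-\sum_{i=0}^{k-1}t_{i}c_{i}$, absorb $-t_{k}\theta\innp{q_{k},y^{k}-y}$ via \cref{fact:preliminaries}\cref{fact:preliminaries:qk}, and retain only the $i=k-1$ summand of $-\sum_{i}t_{i}c_{i}$ --- is precisely the derivation underlying the cited inequality, so the mathematics coincides; your version is simply self-contained where the paper outsources the work. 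You also correctly identify the one point requiring care: \cref{fact:preliminaries} is stated for the convex setting with $\alpha_{k}=c_{\alpha}\tau_{k-1}$ rather than a constant $\alpha$, so the strengthened descent inequality must indeed be taken from (or rederived as in) the strongly convex part of the reference rather than read off \cref{fact:preliminaries}\cref{fact:preliminaries:f} directly. No gap; the bookkeeping you describe checks out.
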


\begin{proof}
	\cref{lemma:sconvexsconcave:tk}: Because $(\forall k \in \mathbf{N})$  $\theta_{k} 
	\equiv \theta \in (0,1)$, we have that 
	\begin{align*}
		(\forall k \in \mathbf{N}) \quad t_{k} = \frac{\theta_{0}}{\theta_{0}\theta_{1}\cdots 
			\theta_{k}} = \frac{\theta}{\theta^{k+1}}=\frac{1}{\theta^{k}}.
	\end{align*}
	
\cref{lemma:sconvexsconcave:sum}: 
This is a direct result of \cref{lemma:sconvexsconcave:tk} and
\cite[Inequality~(46)]{BotCsetnekSedlmayer2022accelerated}.

\cref{lemma:sconvexsconcave:leq}:  Based on \cref{eq:lemma:sconvexsconcave}, we 
know that 
\begin{align*}
	\frac{1}{\tau} -\frac{L_{yx}}{\alpha} \geq 0 \quad \text{and} \quad 
	\frac{1}{\tilde{\sigma}} -L_{yy} 
	= \frac{1 -\theta \sigma (\alpha L_{yx + L_{yy}})}{\sigma}- L_{yy} \geq 0.
\end{align*}
Combine this result with
\cref{lemma:sconvexsconcave:sum} to derive that
	\begin{align*}
	&\sum^{k-1}_{i=0} t_{i} \left(f(x^{k+1},y) -f(x,y^{k+1}) \right) \\
	\leq & \frac{1}{2\tau} \norm{x -x^{0}}^{2} + \frac{1}{2\sigma} \norm{y -y^{0}}^{2} 
	-\frac{1}{\theta^{k}} \frac{1}{2 \tau} \norm{x -x^{k}}^{2} 
	-\frac{1}{\theta^{k}}\frac{1}{2 \tilde{\sigma}} \norm{y -y^{k}}^{2},
\end{align*} 
which ensures the required result clearly. 
\end{proof}

In \cref{theorem:stronglyconvexstronglyconcave} below, we show the convergence 
of OGAProx with associated function being strongly convex-strongly concave.
Notice that the assumption of  \cref{theorem:stronglyconvexstronglyconcave}  below
is the same as that of \cite[Theorem~14]{BotCsetnekSedlmayer2022accelerated}
which shows the convergence of the sequence of iterations generated by the OGAProx
and  the convergence of the min-max gap evaluated at the associated ergodic 
sequences
under the strongly convex-strongly concave setting. 
 \begin{theorem} \label{theorem:stronglyconvexstronglyconcave}
 	Let $\alpha \in \mathbf{R}_{++}$. 
 	Set  $\tilde{\theta} :=\max \{ \frac{L_{yx}}{\alpha \mu +L_{yx}}, 
 	\frac{\alpha L_{yx}+2L_{yy}}{\nu +\alpha L_{yx} +2L_{yy}}  \}$. Let $\theta \in 
 	(\tilde{\theta},1) \subseteq [0,1)$. Let 
 	\begin{align*}
 		(\forall k \in \mathbf{N}) \quad \sigma_{k}\equiv \sigma =\frac{1}{\nu} 
 		\frac{1-\theta}{\theta}, \quad \tau_{k} \equiv \tau 
 		=\frac{1}{\mu}\frac{1-\theta}{\theta},  \quad  \text{and} \quad \theta_{k} \equiv 
 		\theta. 
 	\end{align*}
 	Then 
 	\begin{align*}
   -\theta^{k-1}   \left( \frac{1}{2\tau} \norm{\hat{x}_{k} 
 	-x^{0}}^{2} - \frac{1}{2\sigma} \norm{y^{*} 
 	-y^{0}}^{2} \right) &\leq		f(\hat{x}_{k},\hat{y}_{k}) -f(x^{*},y^{*}) \\
 &  \leq  \theta^{k-1} 
 	\left( \frac{1}{2\tau} \norm{x^{*} -x^{0}}^{2} + 
 			\frac{1}{2\sigma} \norm{\hat{y}_{k} 
 				-y^{0}}^{2} \right).
 	\end{align*}
 Consequently, 
the sequence $\left( f(\hat{x}^{k+1}, \hat{y}^{k+1}) \right)_{k \in \mathbf{N}}$ 
linearly  converges to 
 $f(x^{*},y^{*})$ with a convergence rate order 
 $\mathcal{O}\left(\theta^{k}\right)$.
 \end{theorem}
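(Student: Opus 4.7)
The plan is to mimic the proof recipe used for Theorems \ref{theorem:convexconcave} and \ref{theorem:convexstronglyconcave}, but with Lemma \ref{lemma:sconvexsconcave} playing the role that Proposition \ref{prop:finequalities} played in those two cases. The inputs from Fact \ref{fact:fxkykx*y*} are identical; only the upper bound on the weighted ergodic gap and the decay rate of $1/\sum_{i=0}^{k-1}t_i$ differ.

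First I would verify the hypotheses of Lemma \ref{lemma:sconvexsconcave} under the current parameter choices. The two equalities $1+\mu\tau = 1/\theta$ and $1+\nu\sigma = 1/\theta$ are immediate from the definitions of $\tau$ and $\sigma$. For the existence of $\alpha > 0$ satisfying \cref{eq:lemma:sconvexsconcave}, one substitutes $\tau = (1-\theta)/(\mu\theta)$ and $\sigma = (1-\theta)/(\nu\theta)$: the first inequality reduces to $\theta \geq L_{yx}/(\alpha\mu + L_{yx})$, and the latter two are both implied by $\sigma(\alpha L_{yx} + 2L_{yy}) \leq 1$, which becomes $\theta \geq (\alpha L_{yx} + 2L_{yy})/(\nu + \alpha L_{yx} + 2 L_{yy})$; both are guaranteed by the hypothesis $\theta > \tilde\theta$. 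Next, using $t_k = \theta^{-k}$ from Lemma \ref{lemma:sconvexsconcave}\cref{lemma:sconvexsconcave:tk}, a geometric-series computation yields $\sum_{i=0}^{k-1} t_i = (1-\theta^k)/(\theta^{k-1}(1-\theta))$, and hence $1/\sum_{i=0}^{k-1} t_i = \theta^{k-1}(1-\theta)/(1-\theta^k) \leq \theta^{k-1}$ because $1-\theta \leq 1-\theta^k$ for $k \geq 1$. The upper sandwich then follows from combining \cref{eq:lemma:fxkykx*y*:x} (taken at the saddle point) with Lemma \ref{lemma:sconvexsconcave}\cref{lemma:sconvexsconcave:leq} evaluated at $(x,y) = (x^*, \hat y_k)$, and the lower sandwich is obtained symmetrically from \cref{eq:lemma:fxkykx*y*:y} and Lemma \ref{lemma:sconvexsconcave}\cref{lemma:sconvexsconcave:leq} at $(x,y) = (\hat x_k, y^*)$.

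The remaining task is to upgrade the sandwich into the claimed $\mathcal{O}(\theta^k)$ rate, and this is the main obstacle: the two bounding expressions involve the $k$-dependent quantities $\|\hat y_k - y^0\|$ and $\|\hat x_k - x^0\|$, so one must rule out blow-up of the ergodic averages. Following the template of Theorem \ref{theorem:convexstronglyconcave}\cref{theorem:convexstronglyconcave:bounded}, I would apply Lemma \ref{lemma:sconvexsconcave}\cref{lemma:sconvexsconcave:sum} with $(x,y) = (x^*, y^*)$; the saddle-point property makes the left-hand side nonnegative, so the resulting inequality forces $\theta^{-k}\|x^*-x^k\|^2$ and $\theta^{-k}\|y^*-y^k\|^2$ to remain bounded. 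This yields linear convergence of $(x^k, y^k)$ to $(x^*, y^*)$, hence boundedness of $(\hat x_k, \hat y_k)$, and combining this with the sandwich produces the claimed $\mathcal{O}(\theta^k)$ convergence of $f(\hat x_k, \hat y_k)$ to $f(x^*, y^*)$.
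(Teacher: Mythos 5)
Your proposal is correct and follows essentially the same route as the paper: verify the parameter conditions of \cref{lemma:sconvexsconcave}, combine \cref{fact:fxkykx*y*} with \cref{lemma:sconvexsconcave}\cref{lemma:sconvexsconcave:leq} at $(x^{*},\hat{y}_{k})$ and $(\hat{x}_{k},y^{*})$, and bound $1/\sum_{i=0}^{k-1}t_{i}$ by $\theta^{k-1}$ via the geometric series. The only (harmless) deviation is that you establish boundedness of the ergodic averages internally from \cref{lemma:sconvexsconcave}\cref{lemma:sconvexsconcave:sum} evaluated at the saddle point, whereas the paper simply cites the linear convergence of $\left((x^{k},y^{k})\right)_{k\in\mathbf{N}}$ from \cite[Theorem~14]{BotCsetnekSedlmayer2022accelerated}; likewise you check the hypotheses of \cref{lemma:sconvexsconcave} by direct substitution instead of citing \cite[Proposition~13]{BotCsetnekSedlmayer2022accelerated}.
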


\begin{proof}
	Due to  \cite[Proposition~13]{BotCsetnekSedlmayer2022accelerated}, the assumption
	above on the parameters $\left(\sigma_{k}\right)_{k \in \mathbf{N}}$,
	$\left(\tau_{k}\right)_{k \in \mathbf{N}}$, and $\left(\theta_{k}\right)_{k \in 
	\mathbf{N}}$ satisfy related requirements in \cref{lemma:sconvexsconcave}.

	 In view of \cite[Theorem~14]{BotCsetnekSedlmayer2022accelerated},
	 we know that  
	 $\left( (x^{k},y^{k}) \right)_{k \in \mathbf{N}}$ linearly converges to $(x^{*},y^{*})$,
	 which, via \cref{eq:hatxy}, guarantees the boundedness of  $\left( (x^{k},y^{k}) 
	 \right)_{k \in \mathbf{N}}$ and $\left( (\hat{x}^{k+1}, \hat{y}^{k+1}) 
	 \right)_{k \in \mathbf{N}}$.
	 
	 Let $k$ be in $\mathbf{N} \smallsetminus \{0\}$.
	Combine \cref{fact:fxkykx*y*} and 
	\cref{lemma:sconvexsconcave}\cref{lemma:sconvexsconcave:leq}
	to derive that
	\begin{subequations}\label{theorem:stronglyconvexstronglyconcave:leq}
	\begin{align}
  f(\hat{x}_{k},\hat{y}_{k}) -f(x^{*},y^{*})  
	&\leq \frac{1}{\sum^{k-1}_{i=0} t_{i} } \sum^{k-1}_{j=0} t_{j} \left( f(x^{j+1}, 
	\hat{y}_{k})  
	-f(x^{*},y^{j+1}) \right)\\
	&\leq \frac{1}{\sum^{k-1}_{i=0} t_{i} } \left( \frac{1}{2\tau} \norm{x^{*} -x^{0}}^{2} + 
	\frac{1}{2\sigma} \norm{\hat{y}_{k} 
		-y^{0}}^{2} \right)
\end{align}		
	\end{subequations}
and 
	\begin{subequations}\label{theorem:stronglyconvexstronglyconcave:geq}
\begin{align}
 f(\hat{x}_{k},\hat{y}_{k}) -f(x^{*},y^{*})  
	&\geq -\frac{1}{\sum^{k-1}_{i=0} t_{i} } \sum^{k-1}_{j=0} t_{j} \left( f(x^{j+1},y^{*})  
	-f(\hat{x}_{k},y^{j+1}) \right)\\
	&\geq -\frac{1}{\sum^{k-1}_{i=0} t_{i} }
	 \left( \frac{1}{2\tau} \norm{\hat{x}_{k} -x^{0}}^{2} - \frac{1}{2\sigma} \norm{y^{*} 
		-y^{0}}^{2} \right).
\end{align}	
\end{subequations}

Because $\theta \in (0,1)$, we know that  $0<\theta^{k} \leq \theta<1$ and 
$\frac{1-\theta^{k} }{1-\theta}  \geq 1$. This result together with
  \cref{lemma:sconvexsconcave}\cref{lemma:sconvexsconcave:tk} yields that
\begin{align}\label{theorem:stronglyconvexstronglyconcave:sumtk}
	\sum^{k-1}_{i=0} t_{i}  = \sum^{k-1}_{i=0} \frac{1}{\theta^{i}} =\frac{1}{\theta^{k-1}}
	 \sum^{k-1}_{i=0} \theta^{i} =\frac{1}{\theta^{k-1}} \frac{1-\theta^{k} }{1-\theta} \geq 
	 \frac{1}{\theta^{k-1}}. 
\end{align}
Combine \cref{theorem:stronglyconvexstronglyconcave:leq}, 
\cref{theorem:stronglyconvexstronglyconcave:geq}, and 
\cref{theorem:stronglyconvexstronglyconcave:sumtk} to obtain that 
\begin{align*} 
 	f(\hat{x}_{k},\hat{y}_{k}) -f(x^{*},y^{*})  
 	&\leq \frac{1}{\sum^{k-1}_{i=0} t_{i} } \left( \frac{1}{2\tau} \norm{x^{*} -x^{0}}^{2} + 
 	\frac{1}{2\sigma} \norm{\hat{y}_{k} 
 		-y^{0}}^{2} \right)\\
&\leq \theta^{k-1} \left( \frac{1}{2\tau} \norm{x^{*} -x^{0}}^{2} + 
\frac{1}{2\sigma} \norm{\hat{y}_{k} 
	-y^{0}}^{2} \right)
\end{align*}
and 
\begin{align*}
	f(\hat{x}_{k},\hat{y}_{k}) -f(x^{*},y^{*})  
	&\geq -\frac{1}{\sum^{k-1}_{i=0} t_{i} }
	\left( \frac{1}{2\tau} \norm{\hat{x}_{k} -x^{0}}^{2} - \frac{1}{2\sigma} \norm{y^{*} 
		-y^{0}}^{2} \right)\\
	&\geq  -\theta^{k-1}   \left( \frac{1}{2\tau} \norm{\hat{x}_{k} 
		-x^{0}}^{2} - \frac{1}{2\sigma} \norm{y^{*} 
		-y^{0}}^{2} \right). 
\end{align*}
which, combining with the boundedness of $\left( (\hat{x}^{k+1}, \hat{y}^{k+1}) 
\right)_{k \in \mathbf{N}}$, ensures the required results. 
\end{proof}

 \section*{Acknowledgments}
Hui Ouyang thanks Professor Boyd Stephen for his insight and expertise comments 
 on the topic of saddle-point problems
and all unselfish support. 
Hui Ouyang acknowledges 
the Natural Sciences and Engineering Research Council of Canada (NSERC), 
[funding reference number PDF – 567644 – 2022].

 \addcontentsline{toc}{section}{References}
\bibliographystyle{abbrv}
\bibliography{fconvergence}

\end{document}